\documentclass[11pt,english,a4paper]{smfart}
\usepackage[english]{babel}
\usepackage{xcolor}
\usepackage{pbsi}
\usepackage[T1]{fontenc}
\usepackage{stmaryrd}
\usepackage{mathabx}
\usepackage{dsfont}
\usepackage[mathscr]{euscript}
\usepackage{cancel}
\usepackage{array}
\usepackage{float}
\usepackage{placeins}
\usepackage[pagebackref,colorlinks=true,linkcolor=blue,citecolor=blue,urlcolor=red, hypertexnames=true]{hyperref}

 \usepackage[abbrev,backrefs,nobysame]{amsrefs}
 \usepackage{caption}
 \usepackage{enumerate}

\usepackage{amsmath,amssymb,bm,amsfonts}
\usepackage{nicematrix}
\usepackage[all]{xy}
\entrymodifiers={+!!<0pt,\fontdimen22\textfont2>}

\usepackage{mathrsfs}

\usepackage{graphicx}

\usepackage{color}
\usepackage{comment}

\newtheorem{theorem}{Theorem}[section]
\newtheorem{lemma}[theorem]{Lemma}

\newtheorem{corollary}[theorem]{Corollary}
\newtheorem{proposition}[theorem]{Proposition}

\newcounter{hypothesis}
\renewcommand{\thehypothesis}{(H$_{\arabic{hypothesis}}$)}

\theoremstyle{remark}

\newtheorem*{remark*}{\it Remark}

\newcommand{\flba}[2]{
\xymatrix@C15pt{#1\ar@{|->}[r]&#2}}
\newcommand{\flcourte}[2]{
\xymatrix@C12pt{#1\ar[r]&#2}}

{\theoremstyle{definition}}

{\theoremstyle{definition}\newtheorem{example}[theorem]{Example}}

\theoremstyle{definition}

{\theoremstyle{definition}}

\def\T{\ensuremath{\mathbb T}}
\def\R{\ensuremath{\mathbb R}}
\def\Z{\ensuremath{\mathbb Z}}

\def\C{\ensuremath{\mathbb C}}

\def\N{\ensuremath{\mathbb N}}
\def\P{\ensuremath{\mathbb P}}
\def\E{\mathbb E}

\def\bth{\begin{theorem}}
\def\blm{\begin{lemma}}
\def\bpr{\begin{proposition}}
\def\bpf{\begin{proof}}
\def\epf{\end{proof}}
\def\epr{\end{proposition}}
\def\elm{\end{lemma}}
\def\eth{\end{theorem}}
\def\bco{\begin{corollary}}
\def\eco{\end{corollary}}
\def\be{\begin{enumerate}}
\def\ee{\end{enumerate}}
\def\bea{\begin{enumerate}[\rm (a)]}
\def\beun{\begin{enumerate}[\rm (1)]}
\def\bei{\begin{enumerate}[\rm (i)]}

\renewcommand{\Im}{\operatorname{Im}}
\renewcommand{\Re}{\operatorname{Re}}

\newcommand{\vertiii}[1]{{\left\vert\kern-0.25ex\left\vert\kern-0.25ex\left\vert #1 
    \right\vert\kern-0.25ex\right\vert\kern-0.25ex\right\vert}}

\newcommand{\lan}{\langle}
\newcommand{\ran}{\rangle}

\newcommand{\Omg}{\Omega}

\newcommand{\indic}{\mathds{1}}

\numberwithin{equation}{section}

\author[Valentin Gillet]{Valentin Gillet}
\address[Valentin Gillet]{Univ. Lille, CNRS, UMR 8524 - Laboratoire Paul Painlevé, F-59000 Lille, France}
\email{valentin.gillet@univ-lille.fr}

\subjclass{37A05, 37A10, 37A25, 47A16, 60G57}
\thanks{This work was supported in part by the project COMOP of the French National Research Agency (grant ANR-24-CE40-0892-01). The author also acknowledges the support of the CDP C2EMPI, as well as the French State under the France-2030 programme, the University of Lille, the Initiative of Excellence of the University of Lille, and the European Metropolis of Lille for their funding and support of the R-CDP-24-004-C2EMPI project.}

\begin{document}

\title[Stable invariant measures in linear dynamics]{Stable invariant measures in linear dynamics}

\keywords{Linear operators, semigroups of operators, Banach spaces, invariant measures, ergodicity, random measures, $\alpha$-stable measures.}

\begin{abstract} 
We study the existence of stable invariant measures for operators and strongly continuous semigroups of operators on Banach spaces admitting either a dense bilateral backward orbit or a sufficiently rich family of eigenvectors. These invariant measures are realized as the distributions of stochastic integrals with respect to stable random measures. We also discuss invariant measures with other classes of distributions for such operators and semigroups.
\end{abstract}

\maketitle

\par\bigskip

\section{Introduction} \label{sectionIntroduction}
Invariant measures for linear operators on infinite-dimensional separable Banach spaces play a central role in linear dynamics. The first connections between ergodic theory and linear dynamics date back to the work of Bayart and Grivaux \cite{BayartGrivaux06}, who introduced the notion of a frequently hypercyclic operator. While hypercyclicity corresponds to the existence of a dense orbit, frequent hypercyclicity quantifies how often the iterates of a hypercyclic vector visit a given nonempty open set. Any linear operator on a separable Banach space having an ergodic invariant measure with full support is frequently hypercyclic, and it is well known that every operator satisfying the so called Frequent Hypercyclicity Criterion has an invariant measure with full support (see, e.g., \cite[Prop. 8.1]{BayartMatheron16}, \cite{MurilloPeris13}). 

\smallskip
The existence of sufficiently many unimodular eigenvectors often leads to the existence of invariant measures with full support. More precisely, if $T$ is a linear operator on a separable complex Banach space admitting a sufficiently rich family of eigenvectors associated to unimodular eigenvalues, then one can construct an ergodic invariant measure with full support for $T$ (see, e.g., \cite{BayartGrivaux06}, \cite{BayartGrivaux07}, \cite[Chapter 5]{BM09} and \cite{BayartMatheron16}). Note that the existence of non-degenerate Gaussian invariant measures for operators admitting a perfectly spanning set of eigenvectors associated to unimodular eigenvalues was first introduced by Flytzanis in \cite{Flytzanis95}.

\smallskip
It is well known that the set of hypercyclic vectors of an hypercyclic operator on a separable Banach space is dense $G_\delta$ (\cite[Thm. 1.2]{GodefroyShapiro98}).
By contrast, the set of frequently hypercyclic vectors of a frequently hypercyclic operator is meager in general (\cite[Theorem 5.1]{BonillaGrosseErdmann07}). As a consequence, Baire category methods are of limited use in the context of frequent hypercyclicity.
In counterpart, ergodic theory is a powerful tool in the context of frequent hypercyclicity since any operator admitting an ergodic invariant measure with full support has a set of full measure of frequently hypercyclic vectors, by Birkhoff's ergodic theorem. In particular, it shows that if $B_w$ is a weighted backward shift operator on $\ell_p(\Z_+)$ with $1 \leq p < \infty$ such that the weights satisfy $\sum_{n \geq 1} \frac{1}{\lvert w_1 \dotsm w_n \rvert^p} < \infty $, then the vector $\sum_{n \geq 0} \frac{g_n}{w_1 \dotsm w_n} e_n$ is almost surely frequently hypercyclic for $B_w$, where $(g_n)_{n \geq 0}$ is a sequence of standard Gaussian complex random variables (see \cite[Section 5.5.2]{BM09} or \cite[Section 7.1]{BayartMatheron16}). Bayart and Matheron also showed that the random vector $\sum_{n \geq 0} g_n \frac{z^n}{n!}$ is almost surely frequently hypercyclic for the derivation operator on the space of entire function. This was also proved by Nikula \cite{Nikula14} without using invariant measures (see also \cite{MouzeMunnier21}). Agneessens \cite{AGNEESSENS_2023} studied the existence of almost surely frequently hypercyclic vectors of the form $\sum_{n \in \Z}  X_n u_n$ for operators admitting a dense bilateral backward orbit $(u_n)_{n \in \Z}$, were $(X_n)_{n \in \Z}$ is a sequence of i.i.d random variables satisfying some conditions. In this paper, we discuss invariant measures for linear operators and strongly continuous semigroups of operators on Banach spaces.

\medskip

The first to study the dynamical properties of $\mathcal{C}_0$-semigroups of operators such as hypercyclicity and chaos are Desch, Schappacher and Webb \cite{DeschSchappacherWebb97}, even though Rolewicz \cite{Rolewicz69} was the first to observe the existence of a dense orbit under a $\mathcal{C}_0$-semigroup. Bermúdez, Bonilla, Conejero and Peris \cite{bermudezbonillaconejeroperis05} also studied mixing for semigroups.

\smallskip
As in the discrete-time setting, spectral properties of the generator of a strongly continuous semigroup of operators play an important role in the mixing and chaotic behavior of the semigroup. In particular, El Mourchid \cite{Mourchid06} showed that the existence of sufficiently many purely imaginary eigenvectors of the generator of a $\mathcal{C}_0$-semigroup leads to mixing and chaotic behavior of the semigroup (see also \cite[Theorem 7.32]{GEP}). Bayart and Matheron \cite[Thm. 8.3]{BayartMatheron16} showed that $\mathcal{C}_0$-semigroups of operators for which the generator has sufficiently enough purely imaginary eigenvalues are mixing in the Gaussian sense, and Murillo-Arcila and Peris \cite{MurilloPeris15} proved that any $\mathcal{C}_0$-semigroup of operators satisfying the Frequent Hypercyclicity Criterion has a strongly mixing invariant measure with full support. 

\smallskip

Making use of stochastic integrals with respect to complex Brownian motions, Chakir and El Mourchid \cite{ChakirMourchid18} constructed strongly mixing Gaussian measures with full support for operator semigroups whose generator admits a large family of purely imaginary eigenvalues.
In the same spirit, based to the work on random Gaussian integrals in \cite{vanNeervenWeis05} and generalizing his work on random frequently hypercyclic vectors for operators, Agneessens \cite{AGNEESSENS_2026} studied random frequently hypercylic vectors for $\mathcal{C}_0$-semigroups of operators using stochastic integration with respect to complex Brownian motions. 

\medskip
In this paper, we investigate the existence of random frequently hypercyclic vectors with symmetric stable distributions for operators and $\mathcal{C}_0$-semigroups of operators acting on infinite-dimensional separable Banach spaces. To this end, we consider stochastic integrals with respect to real and complex symmetric stable random measures on $\sigma$-finite measure spaces.
The present work is motivated by the contributions of Agneessens \cite{AGNEESSENS_2023,AGNEESSENS_2026} on random frequently hypercyclic vectors, as well as by the recent work of Mau and Privault \cite{MauPrivault}, who obtained criteria for an infinitely divisible measure on a Banach space to be weakly mixing or strongly mixing with respect to a linear operator in terms of codifference functionals. Most of the constructions in \cite{AGNEESSENS_2026} rely on covariance operators associated to Gaussian measures in order to produce Gaussian random frequently hypercyclic vectors for strongly continuous semigroups of operators. It is therefore natural to investigate whether analogous constructions can be carried out for other classes of infinitely divisible distributions. In particular, one may ask whether random frequently hypercyclic vectors with symmetric stable distributions can be constructed for operators and semigroups of operators, despite the possible absence of finite moments. This question constitutes the main motivation for the present paper.

\smallskip
Throughout the paper, $E$ denotes an infinite-dimensional separable Banach space. All random variables and random integrals are defined on a suitable probability space $(\Omega,\mathcal F,\mathbb P)$.
If $(J,\Sigma, m)$ is a measure space and if $0 < p < \infty$, we denote by $L^p(J;E)$ the space of measurable functions $f : J \to E$ such that $\int_J \lVert f(t) \rVert^p m(dt) < \infty$. For $0<\alpha\leq 2$, we denote by $c_0$ the constant
\[
c_0:=\frac{1}{2\pi}\int_0^{2\pi} |\cos(x)|^\alpha\,dx,
\]
which will appear repeatedly throughout the paper.

\bigskip
Recall that an operator $T$ on $E$ is said to be hypercyclic if there exists $x \in E$ such that the orbit $\{T^n x : n \geq 0\}$ is dense in $E$, and frequently hypercyclic if there exists $x \in X$ such that, for every nonempty open subset $V \subset E$, the set $\{n\in\mathbb N:\, T^n x\in V\}$ has positive lower density.
Similarly, one can define the notions of hypercyclic and frequently hypercyclic semigroups of operators using the Lebesgue measure to define the lower density of a subset of $[0,\infty)$.
We refer the reader to the two books \cite{EngelNagel06} and \cite[Chapter 7]{GEP} for background on operator semigroups, and to
\cite{BM09} and \cite{GEP} for background on linear dynamics.

\smallskip
We define the notion of a random measure on a $\sigma$-finite measure space $(J, \Sigma, m)$ as follows. A random measure on $(J,\Sigma,m)$ is a map $M$ on $\Sigma_0:= \{ A \in \Sigma : m(A) < \infty \} $ taking values in the space of real or complex random variables on $(\Omg, \mathcal{F},\P)$ such that for every pairwise disjoint sets $(A_k)_{1 \leq k \leq n} \subset \Sigma_0$, the random vectors $(M(A_k))_{1 \leq k \leq n}$ are independent and if for every pairwise disjoint sequence $(A_k)_{k \geq 1} \subset \Sigma_0$ such that $\displaystyle \bigcup_{k \geq 1} A_k \in \Sigma_0$, the series $\displaystyle \sum_{k \geq 1} M(A_k)$ converges in probability and 
$$
\displaystyle \sum_{k \geq 1} M(A_k) = M \left(\bigcup_{k \geq 1} A_k \right) \quad \textrm{a.s}.
$$
A standard way to construct a random measure with prescribed distribution is to apply Kolmogorov's existence theorem to a consistent family of finite-dimensional distributions. In the present work, we consider both real and complex stable random measures. Recall that an $E$-valued random variable $X$ is said to be $\alpha$-stable with $\alpha \in (0,2]$ if for any $a,b >0$ there exists $z \in E$ such that $a X^{(1)} + b X^{(2)}$ has the same distribution as $z + (a^\alpha + b^\alpha)^{1/\alpha} X$, where $X^{(1)}$ and $X^{(2)}$ are independent copies of $X$. If, moreover, $X$ and $-X$ have the same distribution, then we say that $X$ has a symmetric $\alpha$-stable ($S\alpha S$) distribution; if, furthermore, $E$ is complex and $\lambda X$ has the same distribution as $X$ for every $\lambda \in \T$, then we say that $X$ is isotropic. Note that the case $\alpha = 2$ corresponds to a Gaussian distribution. 

\smallskip
For $0<\alpha<2$, the characteristic function of an $E$-valued symmetric $\alpha$-stable random vector $X$ can be written by the Tortrat theorem (\cite{Tortrat}) as
\begin{align} \label{formuletortratstable}
    \E[e^{i \Re(\lan x^*, X \ran)}] = \int_E e^{i \Re(\lan x^*, z \ran)} \mu(dz) = \exp \left(- \int_{S_E} \lvert \Re(\lan x^*, z \ran) \rvert^\alpha \xi(dz) \right)
\end{align}
where $\xi$ is a symmetric finite measure concentrated on the unit sphere $S_E$ of $E$. Such a measure is unique, see Linde \cite[Thm. 6.4.4]{Linde86}, Ledoux--Talagrand \cite[Corollary 5.5]{LedouxTalagrand91} or Woyczyński \cite[p. 6]{Woyczynski19} for the Banach case and Samorodnitsky and Taqqu  \cite[Thm. 2.4.3]{SamorodnitskyTaqqu94} for the complex case. We call the measure $\xi$ the spectral measure of the symmetric $\alpha$-stable random vector $X$. For $0<\alpha<2$, a complex symmetric $\alpha$-stable random variable $X$ is isotropic (i.e.\ rotationally invariant) if and only if its (necessarily unique) spectral measure is the uniform probability measure on the unit circle (see \cite[Section 2.6]{SamorodnitskyTaqqu94}). For a complex symmetric $2$-stable random variable, the expression (\ref{formuletortratstable}) also holds but the measure $\xi$ is not necessarily unique (see \cite[p. 76, Remark 3]{SamorodnitskyTaqqu94}); nevertheless, any symmetric $2$-stable complex random variable admitting the uniform probability measure on the unit circle as a spectral measure is isotropic (\cite[p. 87]{SamorodnitskyTaqqu94}). In this case, its real and imaginary parts are independent and identically distributed centered Gaussian random variables.

\smallskip
In order to construct random frequently hypercyclic vectors for operators or strongly continuous semigroups of operators, we will need certain geometric conditions on the underlying Banach space. Let $E$ be a real or complex separable Banach space. We say that $E$ is of type $p$ (with $1 \leq p \leq 2$) if for every sequence $(x_n)_{n \geq 1} \in \ell_p(E)$, the series $\sum_{n \geq 1} \varepsilon_n x_n$ is almost surely convergent in $E$, where $(\varepsilon_n)_{n \geq 1}$ is a sequence of i.i.d random variables with distribution $\tfrac{1}{2}(\delta_1 + \delta_{-1})$. By Kahane's inequality, the Banach space $E$ is of type $p$ if and only if for some (hence every) finite $0<q < \infty$, there exists a constant $C_{p,q}$ such that, for all finite sequence $(x_n) \subset E$,
$$
\left(\E   \left \lVert \sum_n \varepsilon_n x_n \right \rVert^q  \right)^{1/q} \leq C_{p,q} \left(\sum_n \lVert x_n \rVert^p \right)^{1/p}.
$$
We say that $E$ is of stable-type $p$ (with $0<p\leq 2$) if for every sequence $(x_n)_{n \geq 1} \in \ell_p(E)$, the series $\sum_{n \geq 1} \theta_n^{(p)} x_n$ is almost surely convergent in $E$, where $( \theta_n^{(p)})_{n \geq 1}$ is a sequence of i.i.d standard symmetric $p$-stable random variables (i.e, real random variables satisfying $\E(e^{it \theta_n^{(p)}}) = \exp(-\lvert t \rvert^p)$ for every $t \in \R$). As a consequence of a theorem of Hoffmann-Jørgensen (see, for instance, \cite[Prop. 3.4.5]{Linde86}, \cite[Thm. 12.4]{Schwartz81} or \cite[Remark 6.5.1]{Woyczynski19}), $E$ is of stable-type $p$ if and only if for some (hence every) $q<p$ there exists a constant $C_{p,q}$ such that, for all finite sequence $(x_n) \subset E$,
$$
\left (\E \left \lVert \sum_n \theta_n^{(p)} x_n \right \rVert^q \right)^{1/q} \leq C_{p,q} \left(\sum_n \lVert x_n \rVert^p \right)^{1/p}.
$$
Note that if $E$ is of type $1 \leq p \leq 2$ (resp. of stable-type $0<p\leq 2$), then it is also of type $1\leq q\leq p$ (resp. of stable-type $0 < q \leq p$).
If $E$ is of stable-type $1 \leq p \leq 2$, then it is of type $p$ and if $E$ is of type $1 \leq p \leq 2$, then $E$ is of stable type $q$ for any $0<q<p$ but not of stable-type $p$ since $\theta_n^{(p)}$ does not have $p$-moment. Finally, $E$ is of type $2$ if and only if it is of stable-type $2$. We refer to \cite[Chapter 5]{LiQueffelec18}, \cite[Section 3.5]{Linde86}, \cite[Chapter III]{Schwartz81} and \cite[Sections 6.2 and 6.5]{Woyczynski19} for more details.

\section{Random integrals with respect to symmetric stable random measures}

In this section, we recall the constructions of random integrals of scalar functions with respect to a real or complex random measure on a $\sigma$-finite measure space $(J,\Sigma, m)$ based on Samorodnitsky and Taqqu \cite[Chapters 3 and 6]{SamorodnitskyTaqqu94}. After that, we construct a Banach valued random integrals with respect to a real or complex symmetric stable random measure with the aim of producing random frequently hypercyclic vectors for $\mathcal{C}_0$-semigroups of operators. Under some assumptions on the Banach space $E$ considered, the random frequently hypercyclic vectors that we will construct have a symmetric stable distribution on $E$ (rotationally invariant if $E$ is complex).

\subsection{Integrals with respect to symmetric real stable random measures}
A random measure $M$ on $(J,\Sigma,m)$ is a symmetric real $\alpha$-stable random measure with control measure $m$ ($\alpha \in (0,2]$) if it is a random measure satisfying 
$$
\E(e^{i tM(A)}) = \exp(-m(A) \cdot \lvert t \rvert^\alpha) \quad \textrm{for every $A \in \Sigma_0$ and $t \in \R$}.
$$
In other words, a random measure is real symmetric $\alpha$-stable if $M(A)$ is a real symmetric $\alpha$-stable random variable with parameter $m(A)^{1/\alpha}$ for every $A \in \Sigma_0$. The existence of real symmetric random measures follows from Kolmogorov's existence theorem, see Samorodnitsky and Taqqu \cite[Chapter 3, Section 3.3]{SamorodnitskyTaqqu94}. 

\smallskip
For a function $f : J \to \R$, the random integral of $f$ with respect to a real symmetric $\alpha$-stable random measure is constructed as follows. For a simple function $f = \displaystyle \sum_{j=1}^n a_j \indic_{A_j}$, where $a_j \in \R$ and the $A_js$ are disjoint sets belonging to $\Sigma_0$, we set
$$
\int_J f(t) M(dt) := \sum_{j=1}^n a_j M(A_j).
$$
We deduce that, for such a simple function $f$, we have
\begin{align} \label{eqFonccaractfoncreellemesurerellle}
\E(e^{i t \int_J f(x) M(dx)}) = \exp \left(- \int_J \lvert f(x) \rvert^\alpha m(dx) \cdot  \,\lvert t \rvert^\alpha \right) \quad \textrm{for every $t \in \R$}.
\end{align}
For a general function $f \in L^\alpha(J;\R)$, there exists a sequence $(f_n)_{n \geq 1}$ of simple functions converging almost everywhere to $f$ such that $\lvert f_n \rvert \leq \lvert f \rvert$ for every $n \geq 1$. It can be shown that the sequence $(\int_J f_n dM)_{n \geq 1}$ converges in probability and we thus define $\int_J f(x) M(dx)$ by 
$$
\int_J f(x) M(dx) := \displaystyle \lim_{n \to \infty} \int_J f_n(x) M(dx),
$$
where the limit holds for the convergence in probability and does not depend on the choice of the approximating sequence of step functions $(f_n)_{n \geq 1}$. Since convergence in probability implies convergence in distribution, the random variable $\int_J f \, dM$ has a real symmetric $\alpha$-stable distribution and (\ref{eqFonccaractfoncreellemesurerellle}) holds again. 
Moreover, the two following properties of the random integral hold (see \cite[Prop. 3.5.1 and Thm. 3.5.3]{SamorodnitskyTaqqu94}). 

\smallskip
\begin{proposition} \label{propintegralestochstablemesurereelle}
Let $\alpha \in (0,2]$ and let $M$ be a real symmetric $\alpha$-stable random measure on $(J,\Sigma,m)$ with $m$ as control measure. 
    \begin{enumerate}
        \item If $f \in L^\alpha(J;\R)$ and $(f_j)_{j \geq 1} \subset L^\alpha(J;\R)$, then $\int_J f_j \, dM$ converges in probability to $\int_J f \, dM$ if and only if $\int_J \lvert f_j -f \rvert^\alpha dm$ converges to $0$.
        \smallskip
        \item If $\alpha <2$ and if $f_1, f_2 \in L^\alpha(J;\R)$, then $\int_J f_1 \, dM$ and $\int_J f_2 \, dM$ are independent if and only if $f_1 \cdot f_2 \equiv 0$ $m$-almost everywhere on $J$.
        \smallskip
        \item If $\alpha = 2$ and if $f_1, f_2 \in L^2(J;\R)$, then $\int_J f_1 \, dM$ and $\int_J f_2 \, dM$ are independent if and only if $\int_J f_1(x) f_2(x) m(dx) = 0$.
    \end{enumerate}
\end{proposition}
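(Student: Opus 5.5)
All three parts rest on one tool: the joint characteristic function of finitely many stable integrals. By linearity of the integral, first on simple functions and then by passage to limits in probability, for $f_1,f_2\in L^\alpha(J;\R)$ and $s,t\in\R$ we have
\[
\E\bigl(e^{i(s\int_J f_1\,dM+t\int_J f_2\,dM)}\bigr)=\exp\Bigl(-\int_J |sf_1+tf_2|^\alpha\,dm\Bigr).
\]
This is (\ref{eqFonccaractfoncreellemesurerellle}) applied to $sf_1+tf_2$. To justify it I approximate $f_1,f_2$ by simple functions built on a common disjoint partition. The joint law then converges, since convergence in probability of each coordinate implies joint convergence in probability.

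\emph{Part (1).} The difference $\int_J f_j\,dM-\int_J f\,dM=\int_J (f_j-f)\,dM$ is symmetric $\alpha$-stable with characteristic function $\exp(-\sigma_j^\alpha|t|^\alpha)$, where $\sigma_j^\alpha=\int_J|f_j-f|^\alpha dm$. A sequence of $S\alpha S$ variables with scales $\sigma_j$ converges to $0$ in probability exactly when it converges to $0$ in distribution. By Lévy's continuity theorem this happens exactly when $\exp(-\sigma_j^\alpha|t|^\alpha)\to1$ for all $t$, that is, when $\sigma_j\to0$. For the "if" direction, note that $\P(|\sigma_j Z|>\varepsilon)\to0$ when $\sigma_j\to 0$, where $Z$ is standard stable.

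\emph{Part (3), $\alpha=2$.} Here the pair $(\int f_1dM,\int f_2dM)$ is centered Gaussian, because
\[
\int_J|sf_1+tf_2|^2dm=s^2\|f_1\|^2+2st\lan f_1,f_2\ran+t^2\|f_2\|^2 .
\]
A Gaussian pair is independent if and only if its covariance vanishes. From the characteristic function the covariance is $2\int_J f_1f_2\,dm$.

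\emph{Part (2), $\alpha<2$.} Independence holds if and only if the joint characteristic function factorizes for all $s,t$, i.e.
\[
\int_J |sf_1+tf_2|^\alpha dm=|s|^\alpha\int_J|f_1|^\alpha dm+|t|^\alpha\int_J|f_2|^\alpha dm .
\]
If $f_1f_2=0$ a.e., this identity is immediate. The converse is the main obstacle. I will set $s=t=1$ and $s=1,t=-1$ and add the two identities, which gives
\[
\int_J\bigl(|f_1+f_2|^\alpha+|f_1-f_2|^\alpha-2|f_1|^\alpha-2|f_2|^\alpha\bigr)\,dm=0 .
\]
The plan is to prove the pointwise inequality $|a+b|^\alpha+|a-b|^\alpha\le 2|a|^\alpha+2|b|^\alpha$ for $\alpha\le 2$ (or the reverse inequality for $\alpha\le1$?), with equality iff $ab=0$.

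That inequality does not hold in a single direction for all $\alpha<2$. For $\alpha=1$ and $a=b=1$ the left side is $2$ and the right side is $4$, so the inequality holds there. But for $\alpha\in(1,2)$ the reverse Clarkson-type inequality can hold instead. To get a sign-definite integrand I will therefore use the function $\phi(a,b)=|a+b|^\alpha+|a-b|^\alpha-2|a|^\alpha-2|b|^\alpha$ differently. Fix $a,b\neq0$ and set $r=b/a$. Then $|1+r|^\alpha+|1-r|^\alpha-2-2|r|^\alpha$ is strictly negative for all $r\neq0$ when $0<\alpha<2$. Writing $g(u)=|1+u|^\alpha+|1-u|^\alpha-2|u|^\alpha$, the claim is $g<2$. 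I would check this by symmetry $u\mapsto 1/u$ (which reduces to $0<u\le1$) together with concavity and expansion arguments. At $u=1$ it reads $2^\alpha<4$, which holds.

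If that pointwise strict inequality goes through, the integrand is $\le0$ with equality only where $f_1f_2=0$, and the vanishing integral forces $f_1f_2=0$ a.e. Should the inequality fail for some $\alpha$, I would fall back on the cited \cite[Thm. 3.5.3]{SamorodnitskyTaqqu94}, which uses the uniqueness of the spectral measure of the stable vector: the spectral measure must concentrate on the coordinate axes.
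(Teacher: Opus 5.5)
Your parts (1) and (3) are fine, and so is the reduction in part (2): independence is equivalent to factorization of the joint characteristic function, and evaluating at $(s,t)=(1,1)$ and $(1,-1)$ gives $\int_J\phi(f_1,f_2)\,dm=0$ with $\phi(a,b)=|a+b|^\alpha+|a-b|^\alpha-2|a|^\alpha-2|b|^\alpha$. The paper itself gives no argument here; it simply quotes \cite[Prop. 3.5.1 and Thm. 3.5.3]{SamorodnitskyTaqqu94}.

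\textbf{The gap.} The only non-routine step of part (2) is the claim that $\phi(a,b)<0$ whenever $ab\neq0$ and $0<\alpha<2$, and you never establish it. You say you ``would check'' it, make the conclusion conditional on it, and otherwise fall back on the citation. The surrounding remarks are also wrong and contradict each other. You claim that $|a+b|^\alpha+|a-b|^\alpha\le2|a|^\alpha+2|b|^\alpha$ does not hold in a single direction and may reverse for $\alpha\in(1,2)$. Then, in the next sentence, you assert exactly this inequality after normalizing $b=ra$. No reversal occurs: the inequality holds for all $\alpha\in(0,2]$, and it is strict when $\alpha<2$ and $ab\ne0$.

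\textbf{Closing it.} The concavity idea you allude to does this in two lines. Since $|a\pm b|^2=(a^2+b^2)\pm2ab$ and $x\mapsto x^{\alpha/2}$ is concave on $[0,\infty)$,
\[
|a+b|^\alpha+|a-b|^\alpha\le 2\,(a^2+b^2)^{\alpha/2}\le 2|a|^\alpha+2|b|^\alpha .
\]
The second step is subadditivity of $x\mapsto x^{\alpha/2}$ for $\alpha/2<1$, which is strict as soon as $a^2>0$ and $b^2>0$.

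Alternatively, use $|x|^\alpha=c_\alpha^{-1}\int_0^\infty(1-\cos(tx))\,t^{-1-\alpha}\,dt$, which the paper also writes down later. This gives
\[
\phi(a,b)=-2c_\alpha^{-1}\int_0^\infty(1-\cos(ta))(1-\cos(tb))\,t^{-1-\alpha}\,dt .
\]

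Either way, $\phi(f_1,f_2)$ is integrable, nonpositive, and strictly negative on $\{f_1f_2\ne0\}$. So $\int_J\phi(f_1,f_2)\,dm=0$ forces $f_1f_2=0$ $m$-a.e. With this step inserted, your proposal is a complete, self-contained proof of what the paper only imports from the literature.
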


\smallskip
\begin{comment}
If $f : T \to \C$ is a function in $L^\alpha(T;\C)$ and $M$ is a real symmetric $\alpha$-stable random measure with $0< \alpha \leq 2$, we define $\int_T f \, dM$ by setting
\begin{align}
    \int_T f \,dM := \int_T \Re(f)\, dM + i \int_T \Im(f) dM.
\end{align}
Note that in this case, $\int_T f\, dM$ has a complex symmetric $\alpha$-stable distribution with characteristic function
\begin{align}
    \E(e^{i( \theta_1 \int_T \Re(f)\, dM + \theta_2 \int_T \Im(f) \, dM)}) = \exp(-\int_T \lvert \theta_1 \Re(f) + \theta_2 \Im(f) \rvert^\alpha dm), \; \theta_1, \theta_2 \in \R.
\end{align}

\smallskip
\end{comment}

We now move to the construction of random integrals of Banach valued functions with respect to a symmetric real stable random measure. 
Let $E$ be a real separable Banach space and let $M$ be a real symmetric $\alpha$-stable random measure on $(J,\Sigma,m)$ with control measure $m$. A function $f : J \to E$ is weakly stochastically integrable with respect to $M$ if for every $x^* \in E^*$, the function $t \mapsto \lan x^*, f(t) \ran$ belongs to $L^\alpha(J;\R)$ and if there exists a random vector $Y : \Omg \to E$ such that for every $x^* \in E^*$, 
\begin{align}
    \lan x^*, Y \ran = \int_J \lan x^*, f \ran \, dM \quad \textrm{a.s},
\end{align}
where the real integral was just defined above. 
In this case, we write $Y= \int_J f\, dM$. Note that $Y$ is uniquely determined. Moreover, the following properties easily follow.

\smallskip

\begin{proposition} \label{propweaklystochintrealrandommeasure}
Let $E$ be a real Banach space and let $M$ be a real symmetric stable random measure on $(J,\Sigma,m)$ with control measure $m$.
If $f,g : J \to E$ are weakly stochastically integrable with respect to $M$, if $a,b \in \R$ and if $T : E \to E$ is an operator on $E$, then $af + b g$ and $T f$ are weakly stochastically integrable with respect to $M$ and we have $$\int_J (a f + b g ) \, dM = a \int_J f \, dM + b \int_J f \, dM$$
and
$$
\int_J Tf \, dM = T\int_J f\, dM.
$$      
\end{proposition}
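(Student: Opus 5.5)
The plan is to reduce both identities to the scalar theory through the defining property of weak stochastic integrability. Set $Y_f := \int_J f\,dM$ and $Y_g := \int_J g\,dM$. I will exhibit an explicit $E$-valued random vector for each new integrand and check the defining identity against every $x^* \in E^*$. Uniqueness of $Y$ then gives the stated equalities. Note that the statement's first display contains a typo: its right-hand side should read $a\int_J f\,dM + b\int_J g\,dM$.

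\emph{Linear combinations.} For $x^*\in E^*$ we have $\langle x^*, af+bg\rangle = a\langle x^*,f\rangle + b\langle x^*,g\rangle$. This lies in $L^\alpha(J;\R)$ because $L^\alpha$ is a vector space for every $\alpha\in(0,2]$, using $|u+v|^\alpha \le 2^{\alpha}(|u|^\alpha+|v|^\alpha)$.

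The one step requiring care is linearity of the scalar integral on $L^\alpha(J;\R)$. For simple functions it holds by definition: refine to a common disjoint partition and use additivity of $M$ on disjoint sets. For general $h_1,h_2\in L^\alpha$, choose simple $h_1^n\to h_1$ and $h_2^n\to h_2$ with $|h_i^n|\le|h_i|$. Dominated convergence gives $\int|h_i^n-h_i|^\alpha dm\to0$, hence
\[
\int_J |ah_1^n+bh_2^n-(ah_1+bh_2)|^\alpha\,dm\to 0 .
\]
Part (1) of Proposition \ref{propintegralestochstablemesurereelle} then yields convergence in probability of $\int(ah_1^n+bh_2^n)\,dM$ to $\int(ah_1+bh_2)\,dM$. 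Since limits in probability are a.s.\ unique and linear, we obtain $\int (ah_1+bh_2)\,dM = a\int h_1\,dM+b\int h_2\,dM$ a.s.

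Applying this with $h_1=\langle x^*,f\rangle$ and $h_2=\langle x^*,g\rangle$ gives $\langle x^*, aY_f+bY_g\rangle = \int\langle x^*,af+bg\rangle\,dM$ a.s. Hence $aY_f+bY_g$ is the required vector, and it is clearly an $E$-valued random vector.

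\emph{Operators.} For $x^*\in E^*$ we have $\langle x^*, Tf(t)\rangle = \langle T^*x^*, f(t)\rangle$ with $T^*x^*\in E^*$, so this function lies in $L^\alpha(J;\R)$ by hypothesis on $f$. Moreover
\[
\langle x^*, TY_f\rangle = \langle T^*x^*, Y_f\rangle = \int_J \langle T^*x^*, f\rangle\,dM = \int_J\langle x^*,Tf\rangle\,dM \quad\text{a.s.}
\]
The vector $TY_f$ is measurable since $T$ is continuous, so $Tf$ is weakly stochastically integrable with integral $TY_f$.

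\emph{Uniqueness.} The remark that $Y$ is uniquely determined is what turns these verifications into equalities, and I would justify it briefly. If $Y,Y'$ both satisfy the defining identity, then $\langle x_k^*, Y-Y'\rangle=0$ a.s.\ for a countable norming family $(x_k^*)$, which exists since $E$ is separable. Hence $Y=Y'$ a.s. No step is a real obstacle; the only nontrivial point is the extension of linearity of the scalar integral from simple functions to $L^\alpha$.
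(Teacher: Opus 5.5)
Your proof is correct and is exactly the routine verification the paper has in mind when it says these properties ``easily follow'' (it gives no written proof): test against each $x^*\in E^*$, using $T^*x^*$ for the operator part, invoke linearity of the scalar integral on $L^\alpha(J;\R)$, and conclude by a.s.\ uniqueness of the weak integral, which holds since $E$ is separable throughout the paper. You are also right that the first display should read $a\int_J f\,dM + b\int_J g\,dM$.
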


\medskip

In the case where $f : J \to E$ is weakly stochastically integrable with respect to the real symmetric random measure $M$ on $(J,\Sigma,m)$, we can easily compute the characteristic function of $\int_J f \, dM$. It is given for every $x^* \in E^*$ by
\begin{align} \label{fonccaractrandomintBanachMreelle}
    \E(e^{i \lan x^*, \int_J f\, dM \ran}) &= \exp \left(-\int_J \lvert \lan x^*, f(t) \ran \rvert^\alpha m(dt) \right).
\end{align}
\smallskip
With (\ref{fonccaractrandomintBanachMreelle}), we can deduce the spectral measure of the distribution of $\int_J f \, dM$ in the case where $f \in L^\alpha(J;E)$ and $M$ is a symmetric real $\alpha$-stable random measure on $(J,\Sigma,m)$ when $0 < \alpha <2$.
Let $\xi = \int_J \frac{1}{2} \lVert f(t) \rVert^\alpha (\delta_{\frac{f(t)}{\lVert f(t) \rVert}} + \delta_{-\frac{f(t)}{\lVert f(t) \rVert}}) m(dt)$.  The measure $\xi$ is finite, symmetric and concentrated on $S_E$, and satisfies
\begin{align} \label{spectralrepresentationrealBanach}
    \E(e^{i \lan x^*, \int_J f\, dM \ran}) = \exp \left(-\int_{S_E} \lvert \lan x^*, z \ran \rvert^\alpha \xi(dz) \right)
\end{align}
for every $x^* \in E^*$. In other words, $\xi$ is the spectral measure of the distribution of $\int_J f \, dM$ when $f : J \to E$ belongs to $L^\alpha(J;E)$ and when the random measure $M$ is real $S\alpha S$ with control measure $m$.

\smallskip

Under some geometric assumptions on the real Banach space $E$, any function in $L^\alpha(J;E)$ is weakly integrable with respect to the real symmetric stable random measure.
Let $E$ be a real Banach space of stable-type $\alpha \in (0,2)$. Let $M$ be a real symmetric $\alpha$-stable random measure on $(J,\Sigma,m)$ with control measure $m$ and let $f : J \to E$ be in $L^\alpha(J;E)$. If $f = \displaystyle \sum_{j=1}^n x_j \indic_{A_j}$ is a simple function with $x_j \in E$ and the sets $A_j$ are disjoint and belong to $\Sigma_0$, we set
\begin{align}
    \int_J f(t) M(dt) := \sum_{j=1}^n x_j M(A_j),
\end{align}
which is a symmetric $\alpha$-stable $E$-valued random vector. Let us notice that the map $f \mapsto \int_J f\, dM$ is linear on the space of simple functions in $L^\alpha(J;E)$ and takes values in $L^q(\Omega;E)$ for every $q<\alpha$. Since $E$ is of stable-type $\alpha$, for every $q<\alpha$, we have
\begin{align*}
    \left \lVert \int_J f \, dM \right \rVert_q &= \left (\E \left \lVert \sum_{j=1}^n x_j M(A_j) \right \rVert^q  \right)^{1/q} \\
    &\leq C  \, \left( \sum_{j=1}^n m(A_j) \lVert x_j \rVert^\alpha \right)^{1/\alpha} \\
    &\leq C \, \left(\int_J \lVert f(t) \rVert^\alpha m(dt) \right)^{1/\alpha},
\end{align*}
where $C$ is a constant depending on $\alpha$ and $q$.
Since the simple functions are dense in $L^\alpha(J;E)$, we can extend the definition of $\int_J f \, dM$ to every function $f \in L^\alpha(J;E)$ by setting 
\begin{align} \label{definitionintegraleEdetypestablealpha}
    \int_J f\, dM := \lim_{n \to \infty} \int_J f_n \, dM
\end{align}
for every sequence of simple functions $(f_n)_{n \geq 1} \subset L^\alpha(J;E)$ converging to $f$, where the convergence holds in $L^q(\Omega;E)$ with $q<\alpha$ (and hence in probability). We thus obtain an $E$-valued random vector $\int_J f \, dM$ for every function $f \in L^\alpha(J;E)$ satisfying
\begin{align} \label{inequationintegralesBanachtypestable}
    \left \lVert \int_J f\, dM \right \rVert_q \leq C \left(\int_J \lVert f(t) \rVert^\alpha m(dt) \right)^{1/\alpha}
\end{align}
for every $0<q<\alpha$. Moreover, the map $f \mapsto \int_J f \, dM$ is linear on $L^\alpha(J;E)$ and takes values in $L^q(\Omg;E)$ for every $q<\alpha$. We refer to  \cite[Section 6.10]{Woyczynski19} for more details. The following observation is easy.

\smallskip

%We can also compute the characteristic function of $\int_T f \, dM$. Indeed, let $x^* \in E^*$ and let $f_n = \sum_j x_j^n \indic_{A_j^n}$ be a sequence of simple functions converging to $f$ in $L^\alpha(T;E)$. Since convergence in probability implies convergence in distribution, we obtain
%\begin{align*}
%    \E(e^{i \Re (\lan x^*, \int_T f\, dM\ran)}) &= \lim_{n \to \infty} \E(\exp(i \sum_j \Re(\lan x^*, x_j^n \ran) M(A_j))) \\
%    &=\lim_{n \to \infty} \prod_j \exp(-m(A_j^n) \lvert \Re (\lan x^*, x_j^n \ran) \rvert^\alpha )) \\
%    &= \lim_{n \to \infty} \exp(-\int_T \lvert \Re (\lan x^*, f_n(t) \ran) \rvert^\alpha m(dt)) \\
%    &= \exp(-\int_T \lvert \Re (\lan x^*, f(t) \ran) \rvert^\alpha m(dt)),
%\end{align*}
%where we used the fact that $M$ is a real random measure. This shows in particular that $\int_T f \, dM$ is symmetric $\alpha$-stable on $E$.
%\smallskip

\begin{lemma} \label{LemmeMesurealéatoirestablereelleintegrfaibleetforte}
    Suppose that $E$ is a real Banach space of stable type $\alpha \in (0,2]$. Let $M$ be a real symmetric $\alpha$-stable random measure on $(J,\Sigma,m)$ with control measure $m$ and let $f \in L^\alpha(J;E)$. Then $f$ is weakly stochastically integrable with respect to $M$.
\end{lemma}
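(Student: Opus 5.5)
The approach is to show that the strong integral $Y:=\int_J f\,dM$ from (\ref{definitionintegraleEdetypestablealpha}) is the random vector demanded by the definition of weak stochastic integrability. Two things must be checked. First, for every $x^*\in E^*$ the scalar function $\langle x^*,f\rangle$ lies in $L^\alpha(J;\R)$. Second, $\langle x^*,Y\rangle=\int_J\langle x^*,f\rangle\,dM$ almost surely. The first is immediate from $|\langle x^*,f(t)\rangle|\le\|x^*\|\,\|f(t)\|$, since this gives
\[
\int_J|\langle x^*,f\rangle|^\alpha\,dm\le\|x^*\|^\alpha\int_J\|f\|^\alpha\,dm<\infty .
\]

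For the second point I will use approximation. I choose simple functions $f_n=\sum_j x_j^n\indic_{A_j^n}$ (disjoint $A_j^n\in\Sigma_0$) with $f_n\to f$ in $L^\alpha(J;E)$. By (\ref{inequationintegralesBanachtypestable}), $\int_J f_n\,dM\to Y$ in $L^q(\Omega;E)$ for some fixed $q<\alpha$, and hence in probability. Because $x^*$ is continuous, this gives $\langle x^*,\int_J f_n\,dM\rangle\to\langle x^*,Y\rangle$ in probability. For simple functions the identity holds pathwise by linearity:
\[
\Bigl\langle x^*,\int_J f_n\,dM\Bigr\rangle=\sum_j\langle x^*,x_j^n\rangle M(A_j^n)=\int_J\langle x^*,f_n\rangle\,dM,
\]
where the right-hand side is the scalar integral of the real simple function $\langle x^*,f_n\rangle$.

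The remaining step is to pass to the limit on the scalar side, and I expect this to be the only real point. Here I invoke Proposition~\ref{propintegralestochstablemesurereelle}(1). Since
\[
\int_J|\langle x^*,f_n\rangle-\langle x^*,f\rangle|^\alpha\,dm\le\|x^*\|^\alpha\int_J\|f_n-f\|^\alpha\,dm\to0,
\]
the scalar integrals $\int_J\langle x^*,f_n\rangle\,dM$ converge in probability to $\int_J\langle x^*,f\rangle\,dM$. Limits in probability are unique almost surely, so $\langle x^*,Y\rangle=\int_J\langle x^*,f\rangle\,dM$ a.s., and $f$ is weakly stochastically integrable.

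One caveat concerns the case $\alpha=2$. The construction above was written for $\alpha<2$, but the same argument applies provided the estimate (\ref{inequationintegralesBanachtypestable}) is available for $\alpha=2$, with $q<2$ or indeed any $q$, using stable type $2$ (equivalently, type $2$). If needed, I would justify this by the same Hoffmann-Jørgensen moment inequality for Gaussian sums.
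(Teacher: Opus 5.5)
Your proposal is correct and follows essentially the same route as the paper: approximate $f$ by simple functions, use the pathwise identity $\langle x^*,\int_J f_n\,dM\rangle=\int_J\langle x^*,f_n\rangle\,dM$, and then identify the two limits in probability, using (\ref{inequationintegralesBanachtypestable}) for the vector integrals and Proposition~\ref{propintegralestochstablemesurereelle}(1) for the scalar ones. Your explicit check that $\langle x^*,f\rangle\in L^\alpha(J;\R)$ and your caveat about the case $\alpha=2$ (the paper writes out the strong-integral estimate only for $\alpha<2$) are small points the paper leaves implicit.
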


\smallskip
\begin{proof}
    Let $(f_n)_{n \geq 1}$ be a sequence of step functions converging to $f$ in $L^\alpha(J;E)$. Let $x^* \in E^*$. Let us notice that $\lan x^*, \int_J f_n \, dM \ran = \int_J \lan x^*, f_n \ran dM$ for every $n \geq 1$. Since $\lan x^*, f_n \ran$ converges to $\lan x^*, f \ran$ in $L^\alpha(J; \R)$, we obtain that $\lan x^*, \int_J f_n \, dM \ran $ converges to $\int_J \lan x^*, f \ran \, dM$ in probability (by Proposition \ref{propintegralestochstablemesurereelle}). Moreover, $\lan x^*, \int_J f_n \, dM \ran$ converges to $\lan x^*, \int_J f \, dM \ran$ in probability by (\ref{inequationintegralesBanachtypestable}). Thus $\lan x^*, \int_J f \, dM \ran = \int_J \lan x^*, f \ran \, dM$ almost surely.
\end{proof}

\subsection{Random integrals with respect to complex isotropic stable random measures}
A random measure $M$ on a measurable space $(J,\Sigma)$ is a complex $\alpha$-stable random measure with circular control measure $k$ if it is a complex random measure such that, for every $A \in \Sigma_0 := \{ B \in \Sigma : k(B \times \mathbb{S}_2) < \infty \}$, the random variables $M^{(1)}(A) := \Re(M(A))$ and $M^{(2)}(A) := \Im(M(A))$ are jointly $S\alpha S$ with spectral measure $k(A \times \cdot)$, where $k$ is a measure on the product space $J \times \mathbb{S}_2$ such that for every $A \in \Sigma$ with $k(A \times \mathbb{S}_2) < \infty$, $k(A \times \cdot )$ is a finite symmetric Borel measure on $\mathbb{S}_2$. Here, independence of the complex-valued random variables $M(A_1), \dotsc, M(A_n)$ means independence of the vectors $\begin{pmatrix}
M^{(1)}(A_1)\\
M^{(2)}(A_1)
\end{pmatrix}, \dotsc, \begin{pmatrix}
M^{(1)}(A_n)\\
M^{(2)}(A_n)
\end{pmatrix} $. We suppose in what follows that the complex stable random measures are \textbf{isotropic} (i.e, rotationally invariant), meaning that the circular control measure has the form $k = m  \gamma$, where $\gamma$ is the uniform probability measure on the unit sphere $\mathbb{S}_2$ of $\C$ and $m$ is a $\sigma$-finite measure on $(J,\Sigma)$ called the control measure. 

\smallskip

We define the random integral $\int_J f \, dM$ for $f : J \to \C$ and $M$ a complex isotropic $\alpha$-stable random measure with circular control measure $k$ exactly as in the case of a real $S\alpha S$ random measure: for a simple function $f = \displaystyle \sum_{j=1}^n a_j \indic_{A_j}$ where the $a_js$ are complex and the $A_js$ are disjoint sets in $\Sigma_0$, we set
$$
\int_J f \, dM := \sum_{j=1}^n a_j M(A_j).
$$
For a general function $f \in L^\alpha(J;\C)$, we choose a sequence $(f_n)_{n \geq 1}$ of complex simple functions converging to $f$ almost everywhere such that $\lvert f_n \rvert \leq \lvert f \rvert$ for every $n \geq 1$. It can be shown again that the sequence $(\int_J f_n \, dM)_{n \geq 1}$ converges in probability and we thus set
$$
\int_J f \, dM := \lim_{n \to \infty} \int_J  f_n \, dM,
$$
where the limit holds for the convergence in probability and does not depend on a particular choice of approximating sequence $(f_n)_{n \geq 1}$. The map $f \mapsto \int_J f\, dM$ is linear on $L^\alpha(J;\C)$. Moreover, for every function $f \in L^\alpha(J;\C)$, by \cite[Prop. 6.2.4]{SamorodnitskyTaqqu94}, the random variable $\Re(\int_J f \, dM)$ is symmetric $\alpha$-stable with characteristic function
\begin{align}
    \E(e^{it \Re(\int_J f \, dM)}) = \exp \left(-\int_J \left(\int_{\mathbb{S}_2} \lvert u_1 \Re(f(x)) - u_2 \Im(f(x)) \rvert^\alpha \gamma(du) \right) m(dx) \cdot \lvert t \rvert^\alpha \right),\; t \in \R.
\end{align}
Note that this characteristic function can also be written as
\begin{align} \label{fonccarfonccomplexmesureisotropecomplex}
    \E(e^{it \Re(\int_J f \, dM)}) = \exp \left(- c_0\int_J \lvert f(x) \rvert^\alpha m(dx) \cdot \lvert t \rvert^\alpha \right), \; t \in \R,
\end{align}
where $c_0 := \frac{1}{2\pi} \int_0^{2 \pi} \lvert \cos \phi \rvert^\alpha d\phi $, and the random variables $\Re(\int_J f \, dM)$ and $\Im(\int_J f \, dM)$ have the same distribution, see also \cite[Thm. 6.3.1]{SamorodnitskyTaqqu94}.

\smallskip
If $\alpha = 2$, then for every function $f \in L^2(J;\C)$, we have the following equality
\begin{align} \label{propisometryintcomplexemesurecomplexe}
    \E \left( \left \lvert \int_J f \, dM \right \rvert^2 \right) = 4 c_0 \int_J \lvert f(t) \rvert^2 m(dt).
\end{align}
Indeed, the relation (\ref{propisometryintcomplexemesurecomplexe}) is satisfied for a simple function $f = \displaystyle \sum_{j=1}^n a_j M(A_j)$ since in this case we have
\begin{align*}
    \E \left( \left \lvert \int_J f \, dM \right \rvert^2 \right) &= \sum_{k,l = 1}^n a_k \, \overline{a_l} \, \E(M(A_k) \overline{M(A_l)}) \\
    &= 4 c_0 \sum_{k=1}^n \lvert a_k \rvert^2 m(A_k) \\
    &= 4 c_0 \int_J \lvert f(t) \rvert^2 m(dt).
\end{align*}

\smallskip
Similarly to the case of a real symmetric stable random measure, the following properties hold. 

\smallskip

\bpr \label{propintscalairemesurealéacomplexestable}
Let $\alpha \in (0,2]$ and let $M$ be a complex isotropic $\alpha$-stable random measure on $(J,\Sigma)$ with control measure $m$.
\begin{enumerate}
    \item For $f, f_j \in L^\alpha(J;\C)$, the sequence $\int_J f_j \, dM$ converges in probability to $\int_J f \, dM$ if and only if $\int_J \lvert f_j - f \rvert^\alpha dm$ converges to $0$.
    \smallskip
    \item For $f,g \in L^\alpha(J;\C)$ and $\alpha<2$, the random variables $\int_J f \, dM$ and $\int_J g \, dM$ are independent if and only if $f \cdot g \equiv 0$ $m$-a.e.
    \item If $\alpha = 2$ and if $f,g \in L^2(J;\C)$, the random variables $\int_J f \, dM$ and $\int_J g \, dM$ are independent if and only if $\int_J f(x) \overline{g(x)} m(dx) = 0$.
\end{enumerate}
\epr

If $E$ is a complex separable Banach space and if $M$ is a complex isotropic $\alpha$-stable random measure on $(J,\Sigma)$ with control measure $m$, we define the notion of a weakly stochastically integrable function with respect to the random measure $M$ exactly as in the case of a real symmetric stable random measure on $(J,\Sigma,m)$. In particular, Proposition \ref{propweaklystochintrealrandommeasure} holds again for a complex isotropic stable random measure, and in this case the characteristic function is given for every $x^* \in E^*$ by 
\begin{align}
\notag    \E(e^{i \Re(\lan x^*, \int_J f\, dM \ran)}) &= \exp \left(-\int_J \left (\int_{\mathbb{S}_2} \lvert u_1 \Re(\lan x^*, f(t) \ran) - u_2 \Im(\lan x^*, f(t) \ran)  \rvert^\alpha \,  \gamma(du) \right) m(dt) \right) \\ \label{fonccaractintaleaMcomplexe}
    &= \exp \left(-\int_J \left( \int_{\mathbb{S}_2} \lvert \Re(\lan x^*, u f(t) \ran)\rvert^\alpha \gamma(du) \right) m(dt) \right). 
\end{align}
Note that (\ref{fonccaractintaleaMcomplexe}) can also be written using (\ref{fonccarfonccomplexmesureisotropecomplex}) as
\begin{align} \label{fonccarrandomintegralsmesurecomplexeisotrope}
    \E(e^{i \Re(\lan x^*, \int_J f\, dM \ran)}) &= \exp \left(- c_0 \int_{J} \lvert \lan x^*, f(t) \ran \rvert^\alpha m(dt) \right),
\end{align}
with $c_0 = \frac{1}{2\pi}\int_0^{2\pi} \lvert \cos \phi \rvert^\alpha d\phi$. We can also identify the spectral measure of the distribution of $\int_J f \, dM$ when $f \in L^\alpha(J;E)$. Indeed, let $\xi = \int_{J \times \mathbb{S}_2} \frac{1}{2} \lVert f(t) \rVert^\alpha (\delta_{u \frac{f(t)}{\lVert f(t) \rVert}} + \delta_{- u\frac{f(t)}{\lVert f(t) \rVert}}) k(dt,du)$. The measure $\xi$ is finite, symmetric and concentrated on $S_E$, and
\begin{align} \label{mesurespectralemesuremustablecomplexe}
    \E(e^{i \Re(\lan x^*, \int_J f\, dM \ran)}) = \exp \left(-\int_{S_E} \lvert \Re( \lan x^*, z \ran) \rvert^\alpha \xi(dz) \right)
\end{align}
for every $x^* \in E^*$. In other words, $\xi$ is the spectral measure of the distribution of $\int_J f \, dM$ when $f : J \to E$ belongs to $L^\alpha(J;E)$ and when the random measure $M$ is complex isotropic $\alpha$-stable on $(J,\Sigma,m)$.

\smallskip

Finally, if $E$ is a complex separable Banach space of stable type $\alpha \in (0,2]$, if  $M $ is a complex isotropic $\alpha$-stable random measure on $(J,\Sigma,m) $ and if $f : J \to E$ belongs to $L^\alpha(J;E)$, we can define the random integrals $\int_J f \, dM$ exactly as in the case of a real symmetric stable random measure on $(J,\Sigma,m)$ since the inequality (\ref{inequationintegralesBanachtypestable}) also holds in this context. Indeed, suppose first that $f = \displaystyle \sum_{j=1}^n x_j \indic_{A_j}$ is a simple function. Then for $1 \leq  q<\alpha$ we just use the triangle inequality to obtain
\begin{align*}
    \left \lVert \int_J f \, dM \right\rVert_q &= \left \lVert \sum_{j=1}^n x_j M(A_j) \right\rVert_q \\
    &\leq \left \lVert \sum_{j=1}^n x_j \Re(M(A_j)) \right \rVert_q + \left \lVert \sum_{j=1}^n x_j \Im(M(A_j)) \right \rVert_q \\
    &\leq c \, \cdot \, \left (\sum_{j=1}^n m(A_j) \lVert x_j \rVert^\alpha \right)^{1/\alpha} \\
    &\leq c \, \cdot \left (\int_J \lVert f(t) \rVert^\alpha m(dt) \right)^{1/\alpha}
\end{align*}
where the constant $c$ only depends on $\alpha,q$ and on $c_0 = \frac{1}{2\pi} \int_0^{2\pi} \lvert \cos \phi \rvert^\alpha d\phi$. For $0<q<1$ and $q<\alpha$, we use that for every $a,b \geq 0$ we have $(a+b)^q \leq a^q + b^q$, and hence for every functions $f_1, f_2 \in L^q(\Omg;E)$ we have $\lVert f_1 + f_2 \rVert_q^q \leq \lVert f_1 \rVert_q^q + \lVert f_2 \rVert_q^q $, showing that the inequality again holds with a different constant when $f$ is a simple 
function. Since the simple functions are dense in $L^\alpha(J;E)$, we can again extend the definition of $\int_J f\, dM$ to every function $f \in L^\alpha(J;E)$ similarly to (\ref{definitionintegraleEdetypestablealpha}), and the random integral $\int_J f\,dM$ satisfies the same properties as for a symmetric stable real random measure on $(J,\Sigma,m)$. Moreover, Lemma \ref{LemmeMesurealéatoirestablereelleintegrfaibleetforte} also holds in the case of a complex isotropic stable random measure.
\smallskip

\begin{lemma} \label{LemmeMesurealéatoirestablecomplexeintegrfaibleetforte}
    Suppose that $E$ is a complex separable Banach space of stable type $\alpha \in (0,2]$. Let $M$ be a complex isotropic $\alpha$-stable random measure on $(J,\Sigma)$ with control measure $m$ and let $f \in L^\alpha(J;E)$. Then $f$ is weakly stochastically integrable with respect to $M$.
\end{lemma}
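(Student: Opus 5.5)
The argument will follow the proof of Lemma \ref{LemmeMesurealéatoirestablereelleintegrfaibleetforte}, replacing Proposition \ref{propintegralestochstablemesurereelle} by its complex analogue, Proposition \ref{propintscalairemesurealéacomplexestable}. The candidate random vector is $Y := \int_J f \, dM$, defined as the limit in $L^q(\Omega;E)$ ($0<q<\alpha$), hence in probability, of $\int_J f_n \, dM$. Here $(f_n)_{n \geq 1}$ is a sequence of simple functions $f_n = \sum_j x_j^n \indic_{A_j^n}$ (disjoint $A_j^n \in \Sigma_0$) converging to $f$ in $L^\alpha(J;E)$. This limit exists by the complex version of inequality (\ref{inequationintegralesBanachtypestable}) established just before the statement.

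Fix $x^* \in E^*$. First I check the integrability requirement: $\lvert \lan x^*, f(t) \ran \rvert \leq \lVert x^* \rVert \, \lVert f(t) \rVert$, so $\lan x^*, f \ran \in L^\alpha(J;\C)$. For simple functions, linearity of the scalar integral gives the exact identity
\[
\lan x^*, \int_J f_n \, dM \ran = \sum_j \lan x^*, x_j^n \ran M(A_j^n) = \int_J \lan x^*, f_n \ran \, dM .
\]
Next, the same pointwise bound gives
\[
\int_J \lvert \lan x^*, f_n \ran - \lan x^*, f \ran \rvert^\alpha \, dm \leq \lVert x^* \rVert^\alpha \int_J \lVert f_n - f \rVert^\alpha \, dm \to 0 .
\]
By Proposition \ref{propintscalairemesurealéacomplexestable}(1), the right-hand side of the identity therefore converges in probability to $\int_J \lan x^*, f \ran \, dM$.

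For the left-hand side, $\int_J f_n \, dM \to Y$ in probability in $E$, and $x^*$ is continuous, so $\lan x^*, \int_J f_n \, dM \ran \to \lan x^*, Y \ran$ in probability. Limits in probability are almost surely unique, which yields $\lan x^*, Y \ran = \int_J \lan x^*, f \ran \, dM$ almost surely. This is exactly weak stochastic integrability.

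I expect no real obstacle. The only points needing care are that the complex $L^q$ estimate (including the case $q<1$ handled through $q$-subadditivity) legitimately defines $Y$, and that $\Sigma_0$ for complex measures is defined via $k(A\times\mathbb{S}_2)=m(A)$. Both were settled in the preceding discussion.
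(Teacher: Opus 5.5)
Your proposal is correct and is essentially the paper's proof, which repeats the real-case argument with the complex scalar proposition in place of the real one. The steps are the same: approximate $f$ by simple functions in $L^\alpha(J;E)$, use the identity $\lan x^*, \int_J f_n\,dM\ran = \int_J \lan x^*, f_n\ran\,dM$, get convergence in probability of the right side from part (1) of the complex scalar proposition and of the left side from the $L^q$ estimate, and conclude by almost sure uniqueness of limits in probability. Your extra checks, that $\lan x^*, f\ran \in L^\alpha(J;\C)$ and that the complex $L^q$ bound also covers $q<1$, are details the paper leaves implicit.
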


\smallskip
\begin{proof}
    The proof follows exactly the same lines as that of Lemma \ref{LemmeMesurealéatoirestablereelleintegrfaibleetforte}, using Proposition \ref{propintscalairemesurealéacomplexestable} this time. 
\end{proof}

\section{Stable invariant measures for $\mathcal{C}_0$-semigroups of operators}

\subsection{Main results}

Here, we denote by $m$ the Lebesgue measure on $\R$ and $(J,\mathcal{B}_J)$ denotes an interval equipped with its Borel subsets.
The aim of this subsection is to prove two general results on the existence of stable invariant measures with full support for strongly continuous semigroups of operators. The first one is the following. 

\smallskip

\bth \label{thm1espacereeloucomplexemesurealéaréelleoucomplexe}
Let $E$ be a real or complex infinite dimensional separable Banach space of stable type $\alpha \in (0,2]$. Let $(T_s)_{s \geq 0}$ be a $\mathcal{C}_0$-semigroup of operators on $E$. Suppose that there exists a family $(u_t)_{t \in \R}$ in $E$ with $t \mapsto u_t \in L^\alpha(\R;E)$ such that $T_s u_t = u_{t-s}$ for every $t \in \R$ and $s \geq 0$. Assume moreover that $\textrm{span}\{ u_t : t \in \R \setminus N \}$ is dense in $E$ for every Borel subset $N \subset \R$ with $m(N) = 0$. Let $M$ be either a real symmetric $\alpha$-stable random measure on $(\R,\mathcal{B}_\R)$ if $E$ is real, or a complex isotropic $\alpha$-stable random measure on $(\R,\mathcal{B}_\R)$ if $E$ is complex, with the Lebesgue measure $m$ as control measure. Then the distribution of the $E$-valued random vector
\[
v:= \int_{\R} u_t \, M(dt)
\]
is invariant and strongly mixing with full support under $(T_s)_{s \geq 0}$, and $v$ is a $S\alpha S$ random vector in $E$. 
\eth

\smallskip
From Theorem \ref{thm1espacereeloucomplexemesurealéaréelleoucomplexe}, we deduce the following result.

\smallskip

\bco
Let $E$ be a real or complex infinite dimensional separable Banach space of stable type $\alpha \in (0,2]$. Let $(T_s)_{s \geq 0}$ be a $\mathcal{C}_0$-semigroup of operators on $E$. Suppose that there exists a family $(u_t)_{t \in \R}$ in $E$ with $t \mapsto u_t \in L^\alpha(\R;E)$ such that $T_s u_t = u_{t-s}$ for every $t \in \R$ and $s \geq 0$. Assume moreover that $\textrm{span}\{ u_t : t \in \R \setminus N \}$ is dense in $E$ for every Borel subset $N \subset \R$ with $m(N) = 0$. Let $M$ be either a real symmetric $\alpha$-stable random measure on $(\R,\mathcal{B}_\R)$ if $E$ is real, or a complex isotropic $\alpha$-stable random measure on $(\R,\mathcal{B}_\R)$ if $E$ is complex, with the Lebesgue measure $m$ as control measure.
Then the random vector 
$$
v = \int_{\R} u_t M(dt)
$$
is almost surely frequently hypercyclic for the semigroup $(T_s)_{s \geq 0}$. Moreover, for each $t >0$, the operator $T_t$ is topologically strongly mixing.
\eco

\smallskip

We first focus on the proof of Theorem \ref{thm1espacereeloucomplexemesurealéaréelleoucomplexe} and start by proving multiple lemmas. The first one is the analogue of \cite[Lemma 2.6]{AGNEESSENS_2026} for symmetric stable distributions.

\smallskip

\blm \label{lemmeindependanceintegrales}
Let $E$ be a real or complex separable Banach space and let $J$ be an interval of $\R$. Let $M$ be a real symmetric stable random measure on $(J,\mathcal{B}_J)$ if $E$ is real, or a complex isotropic stable random measure on $(J,\mathcal{B}_J)$ if $E$ is complex, with the Lebesgue measure $m$ as control measure. Let $A,B$ be two disjoint Borel subsets of $J$ and let $f, g : J \to E$ be weakly stochastically integrable with respect to $M$. Then the random vectors $\int_A f \, dM$ and $\int_B g \, dM$ are independent.
\elm

\smallskip

\bpf
By \cite[p. 23]{BuldyginSolntsev97}, it suffices to show that for every $x^*, y^* \in E^*$, the scalar random variables $\lan x^*, \int_A f \, dM \ran$ and  $\lan y^*, \int_B g \, dM \ran$ are independent. But this is clear from Propositions \ref{propintegralestochstablemesurereelle} and \ref{propintscalairemesurealéacomplexestable} since $\lan x^*, \int_A f \, dM \ran = \int_J \lan x^*, f \ran \indic_A \, dM$, $\lan x^*, \int_B g \, dM \ran = \int_J \lan x^*, g \ran \indic_B\, dM$ and the functions $\lan x^*, f \ran \indic_A$ and $\lan x^*, g \ran \indic_B$ have disjoint supports.
\epf

The second lemma is the analogue of \cite[Lemma 2.5]{AGNEESSENS_2026} and directly follows from the expression of the characteristic function.

\smallskip
\blm \label{lemmeloiintegralesintervallestranslatés}
Let $E$ be a real or complex separable Banach space and let $J$ be an interval of $\R$. Let $M$ be either a real symmetric stable random measure on $(J,\mathcal{B}_J)$ if $E$ is real, or a complex isotropic stable random measure on $(J,\mathcal{B}_J)$ if $E$ is complex, with the Lebesgue measure $m$ as control measure. Let $f : J \to E$ be weakly stochastically integrable with respect to $M$. Then for any $s \in \R$, the function $f(\cdot-s)$ is weakly stochastically integrable with respect to $M$ and the random vectors $\int_{J+s} f(t-s) M(dt)$ and $\int_J f(t) M(dt)$ have the same distribution.
\elm

\smallskip

\bpf
We show that the random vectors have the same characteristic function. Suppose for example that the Banach space is real. Let $x^* \in E^*$. Then using (\ref{fonccaractrandomintBanachMreelle}), we have
\begin{align*}
    &\E(\exp({i \lan x^*, \int_{J+s} f(t-s) M(dt) \ran))} \\ 
    &= \exp(- \int_{J+s} \lvert \lan x^*, f(t-s) \ran \rvert^\alpha m(dt)) \\
    &= \exp(-\int_J \lvert \lan x^*, f(t) \ran \rvert^\alpha m(dt)) \\
    &= \E(\exp({i \lan x^*, \int_{J} f(t) M(dt) \ran))}.
\end{align*}
If the Banach space $E$ is complex, we use (\ref{fonccarrandomintegralsmesurecomplexeisotrope}) to obtain the same conclusion. This concludes the proof.
\epf

The third lemma replaces \cite[Proposition 2.4]{AGNEESSENS_2026} but requires geometric conditions on the Banach space $E$. Note however that these geometric assumptions naturally appear in the examples studied in \cite{AGNEESSENS_2026}. In the case of $\alpha$-stable random measures with $\alpha<2$, one cannot expect convergence in $L^p(\Omega;E)$ for all $1\leq p<\infty$, since $\alpha$-stable distributions do not possess finite moments of order $p\geq \alpha$.
\smallskip

\blm \label{lemmeconvergenceprobaintegrales}
Let $E$ be a real or complex separable Banach space of stable type $\alpha \in (0,2]$. Let $M$ be either a real symmetric $\alpha$-stable random measure on $(\R, \mathcal{B}_\R)$ if $E$ is real, or a complex isotropic $\alpha$-stable random measure on $(\R, \mathcal{B}_\R)$ if $E$ is complex, with the Lebesgue measure as control measure. Let $f : \R \to E$ be in $L^\alpha(\R;E)$. Then $\int_0^t f(s) M(ds)$ converges to $\int_0^\infty f(s) M(ds)$ in probability as $t \to \infty$.
\elm

\smallskip
\bpf
The proof follows from the inequality (\ref{inequationintegralesBanachtypestable}) and from the fact that $\int_t^\infty \lVert f(x) \rVert^\alpha m(dx)$ converges to $0$ as $t \to \infty$ since $f$ belongs to $L^\alpha(\R;E)$.
\epf

\smallskip
Finally, we will make use of the following general lemma for stable distributions in Banach spaces.

\smallskip

\blm \label{lemmasupportloistable}
Let $E$ be a real or complex separable Banach space and let $\mu$ be a probability measure on $E$. If $\mu$ is symmetric stable on $E$, then $\textrm{supp}(\mu)$ is a (closed) \textbf{real} subspace of $E$. Moreover, $\{ x^* \in E^* : \hat{\mu}(t x^*) = 1, \, \forall t \in \R \}^{\perp} \subseteq \textrm{supp}(\mu)$.
\elm

\smallskip
\bpf
The fact that $\textrm{supp}(\mu)$ is a real subspace of $E$ follows from Linde \cite[Section 6.9 and the proof of Prop. 6.9.2]{Linde86}. Let us prove the second part of Lemma \ref{lemmasupportloistable}.

Let $x \in E$ such that $x \notin \textrm{supp}(\mu)$. Then by the Hahn-Banach theorem there exists $x^* \in E_{\R}^*$ (the real dual of $E$) such that $x^* = 0$ on $\textrm{supp}(\mu)$ and $\lan x^*, x \ran \ne 0$. Moreover, there exists $a^* \in E^*$ such that $x^* = \Re(\lan a^*, \cdot \ran)$. In particular $\Re(\lan a^*, y \ran) = 0$ for $\mu$-almost every $y \in E$ and thus $\hat{\mu}(t a^*) = \int_E e^{i t \Re(\lan a^*, z \ran)} \mu(dz) = 1$ for every $t \in \R$. Since $\lan a^*, x \ran \ne 0$, we have $x \notin \{ y^* \in E^* : \hat{\mu}(t y^*) = 1, \forall t \in \R \}^{\perp}$, proving the second part of the lemma. 
\epf

\smallskip

We are now ready to prove Theorem \ref{thm1espacereeloucomplexemesurealéaréelleoucomplexe} by adapting the proof given in \cite[Prop 3.2]{AGNEESSENS_2026}. 

\smallskip

\bpf[Proof of Theorem \ref{thm1espacereeloucomplexemesurealéaréelleoucomplexe}]
Let $\mu$ be the distribution of the random vector $v$. 
Let us notice that $\mu$ is $T_s$-invariant for every $s \geq 0$. Indeed, let $A$ be a Borel subset of $E$. By the definitions of $\mu$ and $v$ and by Lemma \ref{lemmeloiintegralesintervallestranslatés} and Proposition \ref{propweaklystochintrealrandommeasure}, we have
\[
\mu(T_s^{-1}(A)) = \mathbb{P}(T_s(v) \in A) = \mathbb{P}\left(\int_{\mathbb{R}} u_{t-s} \, M(dt) \in A\right) = \mathbb{P}\left(\int_{\mathbb{R}} u_t \, M(dt) \in A\right),
\]
showing that $\mu(T_s^{-1}(A)) = \mathbb{P}(v \in A) = \mu(A)$. The measure $\mu$ is thus $T_s$-invariant.

\smallskip

We now show that $\mu$ is $(T_t)_{t \geq 0}$-strongly mixing. Let $f$ and $g$ be two bounded and uniformly continuous complex-valued functions defined on $E$. We aim to show that
\[
\lim_{t \to \infty} \int_E (f \circ T_t)g \, d\mu = \int_E f \, d\mu \int_E g \, d\mu.
\]
Since the set of bounded uniformly continuous functions on $E$ is dense in $L^2(E;\C)$ by Coudène \cite[Thm. 18.1]{Coudene16}, this will imply the claim. First, by definition of $\mu$, this is equivalent to  
\begin{equation} \label{equationeqmelangefortthm1}
\lim_{t \to \infty} \int_{\Omega} f(T_t(v))g(v) \, d\mathbb{P} = \int_{\Omega} f(v) \, d\mathbb{P} \int_{\Omega} g(v) \, d\mathbb{P}. 
\end{equation}
Let $\varepsilon > 0$. Since $f$ and $g$ are uniformly continuous and bounded, by Lemma \ref{lemmeconvergenceprobaintegrales}, there exists $N \geq 0$ such that
\begin{equation}
\left\| g\left(\int_{- \infty}^N u_t \, M(dt)\right) - g(v) \right\|_{L^1(\Omega; \C)} < \varepsilon \label{ineqsurgpreuvethm1}
\end{equation}
and
\begin{equation}
\left\| f\left(\int_{-N}^\infty u_t \, M(dt)\right) - f(v) \right\|_{L^1(\Omega; \C)} < \varepsilon. \label{ineqsurfpreuvethm1}
\end{equation}
Indeed, the sequence $(g(\int_{-\infty}^N u_t M(dt)))_{N \geq 0}$ converges in probability to $g(v)$ since $g$ is uniformly continuous and this sequence in bounded in $L^p(\Omg;\C)$ for every $p \geq 1$. Thus the convergence holds in $L^1(\Omg;\C)$ by Fatou's inequality and by Hölder's inequality.
\smallskip

Let $s > 2N$ and write
\begin{align}
\notag f(T_s v)g(v) &= f(T_s v)g(v) - f(T_s v)g\left(\int_{-\infty}^N u_t \, M(dt)\right) \\ \notag
&\quad + f(T_s v)g\left(\int_{-\infty}^N u_t \, M(dt)\right) - f\left(\int_{-N+s}^\infty u_{t-s} \, M(dt)\right)g\left(\int_{-\infty}^N u_t \, M(dt)\right) \\
&\quad + f\left(\int_{-N+s}^\infty u_{t-s} \, M(dt)\right)g\left(\int_{-\infty}^N u_t \, M(dt)\right). \label{egalitesurfgpreuvethm1}
\end{align}
For the first two terms, we use the fact that $f$ is bounded and the inequality (\ref{ineqsurgpreuvethm1}) to obtain
\begin{align*}
\left| \int_{\Omega} f(T_s v)g(v) \, d\mathbb{P} - \int_{\Omega} f(T_s v)g\left(\int_{-\infty}^N u_t \, M(dt)\right) d\mathbb{P} \right| \\
\leq \|f\|_\infty \left\| g\left(\int_{-\infty}^N u_t \, M(dt)\right) - g(v) \right\|_{L^1(\Omega;\C)} \leq \|f\|_\infty \varepsilon.
\end{align*}
For the third and fourth terms, let us define the random vectors
\begin{alignat*}{2}
X &:= \int_{-\infty}^{-N+s} u_{t-s} \, M(dt), \quad & Y &:= \int_{-N+s}^\infty u_{t-s} \, M(dt), \\
U &:= \int_{-\infty}^{-N} u_t \, M(dt), \quad & V &:= \int_{-N}^\infty u_t \, M(dt).
\end{alignat*}
Then, Lemma \ref{lemmeloiintegralesintervallestranslatés} implies that $X$ and $U$ have the same distribution, and so are $Y$ and $V$. By Lemma \ref{lemmeindependanceintegrales}, $X$ and $Y$ are independent, and so are $U$ and $V$. Thus, the random variables $f(X+Y) - f(Y)$ and $f(U+V) - f(V)$ are identically distributed and we obtain that 
\begin{align*}
&\left| \int_{\Omega} \left( f(T_s v )g\left(\int_{-\infty}^N u_t \, M(dt)\right) - f\left(\int_{-N+s}^\infty u_{t-s} \, M(dt)\right)g\left(\int_{-\infty}^N u_t \, M(dt)\right) \right) d\mathbb{P} \right| \\
&\leq \|g\|_\infty \left\| f\left(\int_{-\infty}^\infty u_{t-s} \, M(dt)\right) - f\left(\int_{-N+s}^\infty u_{t-s} \, M(dt)\right) \right\|_{L^1(\Omega; \C)} \\
&= \|g\|_\infty \left\| f\left(\int_{-\infty}^\infty u_t \, M(dt)\right) - f\left(\int_{-N}^\infty u_t \, M(dt)\right) \right\|_{L^1(\Omega; \C)} \\
&\leq \|g\|_\infty \varepsilon,
\end{align*}
where we used (\ref{ineqsurfpreuvethm1}) for the last inequality. For the last term of (\ref{egalitesurfgpreuvethm1}), since $s > 2N$, and by using Lemma \ref{lemmeindependanceintegrales}, one gets
\begin{align*}
&\int_{\Omega} f\left(\int_{-N+s}^\infty u_{t-s} \, M(dt)\right) g\left(\int_{-\infty}^N u_t \, M(dt)\right) d\mathbb{P} \\
&= \int_{\Omega} f\left(\int_{-N+s}^\infty u_{t-s} \, M(dt)\right) d\mathbb{P} \int_{\Omega} g\left(\int_{-\infty}^N u_t \, M(dt)\right) d\mathbb{P} \\
&= \int_{\Omega} f\left(\int_{-N}^\infty u_t \, M(dt)\right) d\mathbb{P} \int_{\Omega} g\left(\int_{-\infty}^N u_t \, M(dt)\right) d\mathbb{P}.
\end{align*}
Therefore, using again (\ref{ineqsurgpreuvethm1}) and (\ref{ineqsurfpreuvethm1}), we obtain
\begin{align*}
&\left| \int_{\Omega} f\left(\int_{-N+s}^\infty u_{t-s} \, M(dt)\right) g\left(\int_{-\infty}^N u_t \, M(dt)\right) d\mathbb{P} - \int_{\Omega} f(v) \, d\mathbb{P} \int_{\Omega} g(v) \, d\mathbb{P} \right| \\
&\quad \leq \|f\|_\infty \left\| g\left(\int_{-\infty}^N u_t \, M(dt)\right) - g(v) \right\|_{L^1(\Omega;\C)} + \|g\|_\infty \left\| f\left(\int_{-N}^\infty u_t \, M(dt)\right) - f(v) \right\|_{L^1(\Omega;\C)} \\
&\quad \leq \|f\|_\infty \varepsilon + \|g\|_\infty \varepsilon.
\end{align*}
We conclude that
\[
\left| \int_{\Omega} f(T_s(v))g(v) \, d\mathbb{P} - \int_{\Omega} f(v) \, d\mathbb{P} \int_{\Omega} g(v) \, d\mathbb{P} \right| \leq 2\|f\|_\infty \varepsilon + 2\|g\|_\infty \varepsilon,
\]
proving that (\ref{equationeqmelangefortthm1}) holds and that the measure $\mu$ is thus $(T_t)_{t \geq 0}$-strongly mixing.

\smallskip

Let us now show that the measure $\mu$ has full support. By Lemma \ref{lemmasupportloistable}, we have $\{ x^* \in E^* : \hat{\mu}(t x^*) = 1 , \forall t \in \R \}^{\perp} \subseteq \textrm{supp}(\mu)$. But if $x^* \in E^*$ is such that $\hat{\mu}(x^*) = 1$ then $\int_{\R} \lvert \lan x^*, u_t \ran\rvert^\alpha m(dt) = 0$ by (\ref{fonccaractrandomintBanachMreelle}) and (\ref{fonccarrandomintegralsmesurecomplexeisotrope}), and hence $\lan x^*, u_t \ran = 0$ for every $t \in \R \setminus N_{x^*}$ for some Borel subset $N_{x^*}$ of $\R$ with $m(N_{x^*}) = 0$. Thus $x^*= 0$ by assumption and the measure $\mu$ has full support.

\smallskip
This concludes the proof of Theorem \ref{thm1espacereeloucomplexemesurealéaréelleoucomplexe}.
\epf

We now give another proof of the mixing property in Theorem \ref{thm1espacereeloucomplexemesurealéaréelleoucomplexe} for a complex Banach space when $0 < \alpha < 2 $. This new proof is based on \cite[Section 4]{MauPrivault} but adapted to continuous actions. 

\smallskip

\bpr \label{Criteremixingloisstablessemigpsop}
Let $E$ be a complex Banach space of stable type $\alpha \in (0,2)$, and let $\mu$ be a symmetric $\alpha$-stable distribution with spectral measure $\xi$. Let $(T_t)_{t \geq 0}$ be a $\mathcal{C}_0$-semigroup of operators on $E$ leaving $\mu$ invariant. Then $\mu$ is strongly mixing with respect to $\mu$ if and only if for every $x^* \in E^*$, we have
\begin{align*}
    \lim_{t \to \infty} \int_E \lvert \Re(\lan x^*, z \ran) \rvert^{\alpha/2} \lvert \Re(\lan T_t^* x^*, z \ran) \rvert^{\alpha/2} \xi(dz) = 0
\end{align*}
and
\begin{align*}
    \lim_{t \to \infty} \int_E \lvert \Re(\lan x^*, z \ran) \rvert^{\alpha/2} \lvert \Im(\lan T_t^* x^*, z \ran) \rvert^{\alpha/2} \xi(dz) = 0.
\end{align*}
\epr

\smallskip

\bpf
The proof of Proposition \ref{Criteremixingloisstablessemigpsop} is analogous to the proof of \cite[Cor. 4.1]{MauPrivault}. First, let us remark that $\mu$ is strongly mixing with respect to $(T_t)_{t \geq 0}$ if and only if for each $x^* \in E^*$, the $\R^2$-valued process $X_t^{x^*} := (\Re(\lan T_t^* x^*, X \ran), \Im(\lan T_t^* x^*, X \ran)), t \geq 0$, is strongly mixing, where $X $ has distribution $\mu$. Indeed, the proof of \cite[Lemma 2.1]{MauPrivault} carries over to semigroups by replacing discrete time by continuous time and $T^n$ ($n \in \mathbb{Z}_+$) by $T_t$ ($t \geq 0$). Moreover, the proofs in \cite[Lemma 3.1 and Thm. 3.2]{MauPrivault} relying on \cite[Cor. 2.5 and Thm. 3.2]{FuchsStelzer13} also work in continuous time. This concludes the proof of Proposition \ref{Criteremixingloisstablessemigpsop}.
\epf

Note that Proposition \ref{Criteremixingloisstablessemigpsop} and \cite[Cor. 4.1]{MauPrivault} also hold for $\alpha = 1$ for symmetric stable distributions, since the spectral measure is symmetric and concentrated on the unit sphere. Indeed, for every $s \in \R$ and $\alpha \in (0,2)$, we have 
\begin{align*}
    \lvert s \rvert^\alpha = -c_\alpha^{-1} \int_{0}^\infty (\cos(ts) -1) \, t^{-(1+\alpha)} dt,
\end{align*}
and thus for every $x^* \in E^*$, we have
\begin{align*}
    -\int_E \lvert \Re(\lan x^*, z \ran) \rvert^\alpha d\xi(z) &= c_\alpha^{-1} \int_E \int_0^{\infty} (\cos(t \Re(\lan x^*,z \ran)) - 1) \, t^{-(1+\alpha)} dt \\
    &= c_\alpha^{-1} \int_E \int_0^\infty (e^{i t \Re(\lan x^*, z \ran)} - 1 - i t \Re(\lan x^*, z \ran)k(t)) \, t^{-(\alpha+1)} dt \, d\xi(z) \\
    &= \tfrac{1}{2} c_\alpha^{-1} \int_E \int_{-\infty}^\infty (e^{i t \Re(\lan x^*, z \ran)} - 1 - i t \Re(\lan x^*, z \ran)k(t)) \, \lvert t \rvert^{-(\alpha+1)} dt \, d\xi(z)
\end{align*}
with $k(t) = \indic_{\{\lvert t \rvert <1\}}$ and $c_\alpha$ is a positive constant (see also Linde \cite[p. 103]{Linde86}). Here, we used that $\xi$ is symmetric and concentrated on the unit sphere of $E$.

\smallskip

We thus obtain a new proof for the mixing condition in Theorem \ref{thm1espacereeloucomplexemesurealéaréelleoucomplexe}.

\smallskip

\bpf[New proof of the mixing condition in Theorem \ref{thm1espacereeloucomplexemesurealéaréelleoucomplexe}]
We start by the case where $\alpha = 2$, i.e, where $\mu $ is Gaussian. Let us denote by $R$ the covariance operator of $\mu$. Then for every $x^*, y^* \in E^*$, by (\ref{propisometryintcomplexemesurecomplexe}),
\begin{align*}
    \lan y^*, R T_s^* x^* \ran &= \int_E \lan y^*, z \ran \overline{\lan T_s^* x^*, z \ran} \mu(dz) \\
    &= \int_\Omg \int_{\R} \lan y^*, u_t \ran M(dt) \cdot \overline{\int_{\R} \lan x^*, T_s u_t \ran M(dt)} d\P \\
    &= \E \left ( \int_{\R} \lan y^*, u_t \ran M(dt) \cdot \overline{\int_{\R} \lan x^*, T_s u_t \ran M(dt)} \right) \\
    &= 4 c_0 \int_{\R} \lan y^*, u_t \ran \overline{\lan x^*, u_{t-s} \ran} m(dt)
\end{align*}
which converges to $0$ as $s \to \infty$ as the convolution of two square-integrable functions. 

\smallskip
We now consider the case where $\alpha \in (0,2)$ and make use of the spectral measure (\ref{mesurespectralemesuremustablecomplexe}) of $\mu$. Let us notice that for every $x^* \in E^*$, the quantity
\begin{align*}
    &\int_E \lvert \Re(\lan x^*, z \ran) \rvert^{\alpha/2} \lvert \Re(\lan T_s^* x^*, z \ran) \rvert^{\alpha/2} \xi(dz) \\ = &\int_{\R \times \mathbb{S}_2} \lvert \Re(\lan x^*, u \, u_t \ran) \rvert^{\alpha/2} \lvert \Re(\lan x^*, u \, u_{t-s} \ran) \rvert^{\alpha/2} m(dt) \gamma(du)
\end{align*}
converges to $0$ as $s \to \infty$ as the convolution of two square-integrable functions, as well as the quantity
\begin{align*}
    \int_E \lvert \Re(\lan x^*, z \ran) \rvert^{\alpha/2} \lvert \Im(\lan T_s^* x^*, z \ran) \rvert^{\alpha/2} \xi(dz).
\end{align*}
The conclusion of the proof follows from Proposition \ref{Criteremixingloisstablessemigpsop}.
\epf
\smallskip
Note that it is also possible to give another proof of the mixing condition for real Banach spaces by adapting Proposition \ref{Criteremixingloisstablessemigpsop} to the real setting and by using the spectral representation (\ref{spectralrepresentationrealBanach}).

\medskip

We now move to the second main result of this paper for semigroups, corresponding to the analogue of \cite[Thm. 4.1]{AGNEESSENS_2026}. 

\smallskip

\bth \label{thm2espacecomplexemesurealéacomplexe}
Let $E$ be a complex infinite dimensional separable Banach space of stable type $\alpha \in (0,2]$ and let $J$ be an interval. Let $(T_s)_{s \geq 0}$ be a $\mathcal{C}_0$-semigroup of operators on $E$. Suppose that there exists a family $(u_t)_{t \in J}$ in $E$ with $t \mapsto u_t \in L^\alpha(J;E)$ such that $T_s u_t = e^{ist} u_{t}$ for every $t \in J$ and $s \geq 0$. Assume moreover that $\textrm{span}\{ u_t : t \in J \setminus N \}$ is dense in $E$ for every Borel subset $N \subset J$ with $m(N) = 0$.
Then, for every complex isotropic $\alpha$-stable random measure $M$ on $(J,\mathcal{B}_J)$, the distribution of the $E$-valued random vector
$$
v:= \int_{J} u_t \, M(dt)
$$
is invariant under $(T_s)_{s \geq 0}$ with full support and is isotropic $\alpha$-stable. Such a distribution is strongly mixing with respect to $(T_s)_{s \geq 0}$ if $\alpha = 2$, and never ergodic with respect to $(T_s)_{s \geq 0}$ if $\alpha \in (0,2)$.
\eth

\smallskip

In order to prove Theorem \ref{thm2espacecomplexemesurealéacomplexe}, we need the following elementary lemma.

\smallskip

\blm \label{lemmeintégralefoncperiodique}
Let $f : [0,\infty) \to \R$ be a $d$-periodic function in $L^1([0,d];\R)$. Then 
\begin{align*}
    \frac{1}{S} \int_0^S f(x) m(dx) \underset{S \to \infty}{{\longrightarrow}} \, \frac{1}{d} \int_0^d f(x) m(dx).
\end{align*}
\elm

\bpf
Let $S = d  N(S) + r(S) $ with $N(S) \in \Z_+$ and $0 \leq r(S) < d$. Then
\begin{align*}
    \int_0^S f(x) m(dx) &= \sum_{j=0}^{N(S) -1} \int_{d j}^{d(j+1)} f(x) m(dx) + \int_{d N(S)}^{d N(S) + r(S)} f(x) m(dx) \\
    &= N(S) \int_0^d f(x) m(dx) + \int_0^{r(S)} f(x) m(dx).
\end{align*}
Since
\[
\left|\int_0^{r(S)} f(x)\,m(dx)\right|
\le \int_0^d |f(x)|\,m(dx)
\quad \textrm{and} \quad
\frac{N(S)}{S} \to \frac{1}{d},
\]
we obtain
\[
\frac{1}{S}\int_0^S f(x)\,m(dx)
\to
\frac{1}{d}\int_0^d f(x)\,m(dx)
\]
as $S \to \infty$,
concluding the proof of Lemma \ref{lemmeintégralefoncperiodique}.
\epf

\smallskip

\bpf[Proof of Theorem \ref{thm2espacecomplexemesurealéacomplexe}]
Let us denote by $\mu$ the distribution of the random vector $v$. We first show that $\mu$ is invariant with respect to $(T_s)_{s \geq 0}$. For every $x^* \in E^*$ and every $s \geq 0$,
\begin{align*}
    \hat{\mu}(T_s^* x^*) &= \exp(-c_0 \int_{J} \lvert \lan T_s^* x^*, u_t \ran \rvert^\alpha m(dt)) \\
    &= \exp(-c_0 \int_{J} \lvert \lan x^*, T_s u_t \ran \rvert^\alpha m(dt)) \\
    &= \exp(-c_0 \int_{J} \lvert \lan  x^*, e^{ist} u_t \ran \rvert^\alpha m(dt)) \\
    &= \exp(-c_0\int_{J} \lvert \lan x^*, u_t \ran \rvert^\alpha m(dt)) \\
    &= \hat{{\mu}}(x^*)
\end{align*}
showing that $\mu$ and $\mu \circ T_s^{-1}$ have the same characteristic function and hence that $\mu$ is invariant with respect to $(T_s)_{s \geq 0}$. 

\smallskip
As in the proof of Theorem \ref{thm1espacereeloucomplexemesurealéaréelleoucomplexe}, we can show that the measure $\mu$ has full support using Lemma \ref{lemmasupportloistable} and the assumption on the family $(u_t)_{t \in J}$.

\smallskip
Suppose now that $\alpha = 2$ and let us prove that $\mu$ is strongly mixing. Let $R$ be the covariance operator of the Gaussian random vector $v$. Using (\ref{propisometryintcomplexemesurecomplexe}), we have for every $x^*, y^* \in E^*$ and for every $s \geq 0$,
\begin{align*}
    \lan y^*, R T_s^* x^* \ran &= \int_E \lan y^*,z \ran \overline{\lan T_s^* x^*,z \ran} \, \mu(dz) \\
    &=\int_\Omg \int_{J} \lan y^*, u_t \ran M(dt) \cdot \overline{\int_{J} \lan x^*, T_s u_t \ran M(dt)} \, d\P \\
    &= \E \left(\int_{J} \lan y^*, u_t \ran M(dt) \cdot \overline{\int_{J} \lan x^*, T_s u_t \ran M(dt)} \right) \\
    &= 4 c_0 \int_{J} e^{-ist} \lan y^*, u_t \ran \, \overline{\lan x^*, u_t \ran} \,m(dt) 
\end{align*}
which converges to zero as $s \to \infty$ since $t \mapsto u_t \in L^2(J;E)$. This shows that $\mu$ is strongly mixing when $\alpha = 2$.

\smallskip
It remains to prove that the measure $\mu$ is not ergodic with respect to the semigroup when $\alpha \in (0,2)$. By \cite[p. 19]{CornfeldFominSinai82}, it is enough to show that, for some $x^* \in E^*$, the quantity
\begin{align*}
    \frac{1}{S} \int_0^S \E_{\mu}(e^{i \Re(\lan T_s^* x^* - x^*, \cdot \ran)}) m(ds)
\end{align*}
does not converge to $\int_E e^{i \Re(\lan x^*, z \ran)} \mu(dz) \cdot \int_E e^{-i \Re(\lan x^*, z \ran)} \mu(dz)$. By (\ref{fonccarrandomintegralsmesurecomplexeisotrope}), it is enough to show that 
\begin{align*}
    \frac{1}{S} \int_0^S \exp \left(- c_0 \int_{J} \lvert 1 - e^{ist} \rvert^\alpha \lvert \lan x^*, u_t \ran \rvert^\alpha m(dt) + 2 c_0 \int_{J} \lvert \lan x^*, u_t \ran \rvert^\alpha m(dt) \right) m(ds) 
\end{align*}
does not converge to $1$. But by Jensen's inequality we have
\begin{align*}
    &\frac{1}{S} \int_0^S \exp \left(- c_0 \int_{J} \lvert 1 - e^{ist} \rvert^\alpha \lvert \lan x^*, u_t \ran \rvert^\alpha m(dt) + 2 c_0 \int_{J} \lvert \lan x^*, u_t \ran \rvert^\alpha m(dt) \right) m(ds) \\
    &\geq \exp \left(\frac{1}{S} \int_0^S \left(- c_0 \int_{J} \lvert 1 - e^{ist} \rvert^\alpha \lvert \lan x^*, u_t \ran \rvert^\alpha m(dt) + 2 c_0 \int_{J} \lvert \lan x^*, u_t \ran \rvert^\alpha m(dt) \right)  m(ds) \right).
\end{align*}

Now let us notice that
\begin{align*}
    &\frac{1}{S} \int_0^S \left(2 c_0 \int_{J} \lvert \lan x^*, u_t \ran \rvert^\alpha m(dt) - c_0 \int_{J} \lvert 1 - e^{ist} \rvert^\alpha \lvert \lan x^*, u_t \ran \rvert^\alpha m(dt) \right) m(ds) \\
    &= 2 c_0 \int_{J} \lvert \lan x^*, u_t \ran \rvert^\alpha m(dt) - 2^\alpha c_0 \int_{J} \left( \frac{1}{S}  \int_0^S \lvert \sin(\frac{st}{2}) \rvert^\alpha m(ds) \right) \lvert \lan x^*, u_t \ran \rvert^\alpha m(dt) \\
    &= 2 c_0 \int_{J} \lvert \lan x^*, u_t \ran \rvert^\alpha m(dt) - 2^\alpha c_0 \int_{J} \left( \frac{2}{S \lvert t \rvert}  \int_0^{\frac{S \lvert t \rvert}{2}} \lvert \sin(v) \rvert^\alpha m(dv) \right) \lvert \lan x^*, u_t \ran \rvert^\alpha m(dt)  
\end{align*}
which converges as $S \to \infty$ to
\begin{align*}
    &2 c_0 \int_{J} \lvert \lan x^*, u_t \ran \rvert^\alpha m(dt) - 2^\alpha c_0 \int_{J} \lvert \lan x^*, u_t \ran \rvert^\alpha \left( \frac{1}{2\pi} \int_{0}^{2 \pi} \lvert \sin(v) \rvert^\alpha m(dv) \right) m(dt) \\
   & = 2 c_0 (1-2^{\alpha -1} c_0) \int_{J} \lvert \lan x^*, u_t \ran \rvert^\alpha m(dt)
\end{align*}
by Lemma \ref{lemmeintégralefoncperiodique} and by Lebesgue's convergence theorem. Since $\int_{J} \lvert \lan x^*, u_t \ran \rvert^\alpha m(dt) >0$ for every $x^* \ne 0$, it is enough to show that $1- 2^{\alpha -1} c_0 >0$ to conclude that $\mu$ is not ergodic with respect to $(T_s)_{s \geq 0}$. By Jensen's inequality again, we have
\begin{align*}
    \frac{1}{2\pi} \int_0^{2\pi} \lvert \cos(x) \rvert^\alpha m(dx) \leq \left(\frac{1}{2\pi} \int_0^{2\pi} (\cos(x))^2 m(dx) \right)^{\alpha/2},
\end{align*}
that is, $c_0 \leq 2^{-\alpha/2}$. Thus $2^{\alpha -1} c_0 \leq 2^{\alpha - 1 - \alpha/2 } = 2^{\alpha/2 -1} <1$ since $\alpha \in (0,2)$. This concludes the proof of Theorem \ref{thm2espacecomplexemesurealéacomplexe}.
\epf

\subsection{Applications}

We illustrate our main results for semigroups with different examples. 

\smallskip
Let us consider chaotic translation semigroups within the framework of weighted spaces. Let $I \in \{[0, \infty[, \mathbb{R}\}$. For a fixed $1 \leq p < \infty$ and a measurable weight function $\rho : I \longrightarrow (0, \infty)$, define the following complex Banach space:
\[
L^p_\rho(I) := \left\{ f : I \longrightarrow \mathbb{C} : \int_I |f(x)|^p \rho(x) \, dx < \infty \right\}
\]
equipped with the norm $\|f\| := \left( \int_I |f(x)|^p \rho(x) \, dx \right)^{1/p}$. A weight function $\rho$ is called \textit{admissible} if there exist constants $M \geq 1$ and $w \in \mathbb{R}$ such that $\rho(s) \leq M e^{wt} \rho(t+s)$ for every $s \in I$ and $t \geq 0$.
For such an admissible weight, the translation semigroup $(T_t)_{t \geq 0}$ on $L^p_\rho(I)$ is given by:
\begin{align}
    T_t(f)(x) := f(x+t), \quad x \in I,\, t \geq 0.
\end{align}

\smallskip

\bth \label{thmexempletranslationsLpstable}
Let $1 \leq p < \infty$, $I \in \{ [0,\infty), \mathbb{R} \}$, and let $\rho : I \to (0,\infty)$ be an integrable admissible weight such that the moment condition
\[
\int_I |x|^{k p} \rho(x) \, dx < \infty
\]
is satisfied for some integer $k \geq 1$. For any positive, smooth, and integrable function $\varphi: \mathbb{R} \to \mathbb{R}$, we define the map $F: \mathbb{R} \to L^p_\rho(I)$ by
\[
F(\theta)(x) = \varphi(\theta)e^{i\theta x} \quad \text{for all } \theta \in \mathbb{R} \text{ and } x \in I.
\]
Then, the vector 
\[
u_t := \int_{\mathbb{R}} e^{-it\theta} F(\theta) \, d\theta
\]
is well-defined for all $t \in \mathbb{R}$. Furthermore, let $v=\int_{\mathbb R} u_t\,M(dt)$, where $M$ is a complex isotropic $\alpha$-stable random measure on $(\mathbb R,\mathcal B_{\mathbb R})$ with the Lebesgue measure as control measure. Suppose that $\varphi^{(j)}$ is integrable for every $1 \leq j \leq k$. Then $v$ is almost surely frequently hypercyclic for the translation semigroup $(T_t)_{t\geq 0}$ whenever $1/k<\alpha<p$ if $p<2$, and whenever $1/k<\alpha\leq 2$ if $p\geq 2$.
\eth

\smallskip
\begin{proof}
We follow the same idea as \cite[Thm. 4.2]{AGNEESSENS_2026}.
Let $\varphi : \mathbb{R} \longrightarrow \mathbb{R}$ be a positive, smooth, and integrable function on $\mathbb{R}$.  
The function $F$ is thus Bochner-integrable, ensuring that the vector
\[
u_t := \int_{\mathbb{R}} e^{-it\theta} F(\theta) \, d\theta
\]
is well defined for all $t \in \mathbb{R}$. Let us observe that $T_s F(\theta) = e^{is\theta} F(\theta)$ for all $\theta \in \mathbb{R}$ and $s \geq 0$, implying that $T_s u_t = u_{t-s}$ for all $s \geq 0$ and $t \in \mathbb{R}$. Moreover, since $\int_I \lvert x \rvert^{k p} \rho(x) dx < \infty $ and since $\varphi^{(j)}$ is integrable for every $j \leq k$, the function $F$ is $k$-times differentiable and satisfies $\|u_t\| \leq C/\lvert t\rvert^k$ for every $\lvert t \rvert \geq 1$, for some constant $C > 0$ (see \cite[Proof of Lemma 2.4.3]{HytonenvanNeerverVeraarWeis16}). Thus, since $\alpha > 1/k$, we have that $t \mapsto u_t \in L^\alpha(\R;E)$.

\smallskip
We now show that $\textrm{span}\{ u_t : t \in \R \setminus N \}$ is dense in $L^p_\rho(I)$ for every Borel subset $N \subset \R$ with $m(N) = 0$. Let $\phi^* \in (L^p_\rho(I))^*$ such that $\lan \phi^*, u_t \ran = 0$ for every $t \in \R \setminus N$. Then 
$$
\lan \phi^*, u_t \ran = \int_{\R} e^{-i t \theta} \lan \phi^*, F(\theta) \ran \, d\theta = 0 \quad \textrm{for every $t \in \R \setminus N$}
$$
and since the map $\theta \mapsto \lan \phi^*, F(\theta) \ran$ is integrable we deduce that $\lan \phi^*, F(\theta) \ran = 0$ almost everywhere. Moreover, since $\phi^* \in (L^p_\rho(I))^* = L^{p'}_{\rho^{-p'/p}}(I) $ with $p'$ the conjugate exponent of $p$, the functional $\phi^*$ can be represented by $\phi \in L^{p'}_{\rho^{-p'/p}}(I)$ and we obtain 
$$
\int_I \phi(x) e^{i \theta x} dx = 0 \quad \textrm{for a.e $\theta \in \R$},
$$
implying that $\phi = 0$ and thus that $\phi^* = 0$, since $\rho$ is integrable. This shows the density of $\textrm{span}\{ u_t : t \in \R \setminus N \}$ in $L^p_\rho(I)$.

\smallskip
By Theorem \ref{thm1espacereeloucomplexemesurealéaréelleoucomplexe}, the proof of Theorem \ref{thmexempletranslationsLpstable} is complete since $L_\rho^p(I)$ is of stable type $2$ if $p \geq 2$, and of stable type $\alpha<p$ if $p<2$.
\end{proof}

\smallskip

\noindent For a bounded sequence of nonzero complex numbers $(w_n)_{n \geq 1}$, we recall that the backward weighted shift operator $A \colon \ell_p(\mathbb{Z}_+) \to \ell_p(\mathbb{Z}_+)$ is defined on the canonical basis $(e_n)_{n \geq 0}$ of $\ell_p(\mathbb{Z}_+)$ by $A e_0 = 0$ and $A e_n = w_n e_{n-1}$ for every $n \geq 1$.

\smallskip

\begin{theorem} \label{theoremexemplesemigpshiftspoids}
Let $1 \leq p < \infty$ and $A$ be the weighted shift on the complex space $E = \ell_p(\mathbb{Z}_+)$ with sequence of weights $(w_n)_{n \geq 1}$. Set $\beta_n := w_1 \cdots w_n$ for all $n \geq 1$ and $\beta_0 := 1$. If $\rho := \displaystyle \limsup_{n \to \infty} (1/|\beta_n|)^{1/n}$ is finite, then for every $\eta \in (0,1/\rho)$, for every $0 <\alpha < \min(p,2)$ and for every complex isotropic $\alpha$-stable random measure $M$ on $[-\eta, \eta]$, the distribution of the random vector
\[
v := \int_{-\eta}^{\eta} \left( \frac{i^n t^n}{\beta_n} \right)_{n \geq 0} M(dt)
\]
is invariant with respect to $(T_t)_{t \geq 0} := (e^{tA})_{t \geq 0}$ with full support and is isotropic $\alpha$-stable. Moreover, if $p=2$, then the distribution of $v$ is an invariant Gaussian isotropic strongly mixing measure with full support with respect to $(T_t)_{t \geq 0}$ for every complex isotropic $2$-stable random measure on $[- \eta, \eta]$. 
\end{theorem}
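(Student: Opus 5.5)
The plan is to apply Theorem \ref{thm2espacecomplexemesurealéacomplexe} with $J=[-\eta,\eta]$ and $u_t := \left(\frac{i^n t^n}{\beta_n}\right)_{n \geq 0}$. Four things need checking: that $\ell_p$ has the right stable type, that each $u_t$ lies in $\ell_p$, the eigenvector relation $T_s u_t = e^{ist}u_t$, and the density hypothesis.

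\emph{Stable type.} For $1\le p<2$, $\ell_p(\Z_+)$ is of type $p$, hence of stable type $\alpha$ for every $\alpha<p$. For $p\ge 2$, $\ell_p$ is of type $2$, hence of stable type $2$ and a fortiori of stable type $\alpha$ for every $\alpha<2$. So in both cases $E$ has stable type $\alpha$ for every $0<\alpha<\min(p,2)$. When $p=2$, $\ell_2$ has stable type $2$, which covers the Gaussian statement.

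\emph{Integrability.} Since $\limsup_n |\beta_n|^{-1/n}=\rho$ and $\eta<1/\rho$, choose $\rho<r<1/\eta$. Then $|\beta_n|^{-1}\le C r^n$ for all $n$. Hence, uniformly for $|t|\le\eta$,
\[
\|u_t\|_p^p\le \sum_n C^p (r\eta)^{np}<\infty .
\]
The map $t\mapsto u_t$ is continuous, because each coordinate is a polynomial and the tails are uniformly small. It is bounded on the compact interval $J$, so it lies in $L^\alpha(J;E)$ for every $\alpha>0$.

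\emph{Eigenvector relation.} Compute $A u_t = \sum_{n\ge1} \frac{i^n t^n}{\beta_n} w_n e_{n-1} = \sum_{n\ge 0}\frac{i^{n+1}t^{n+1}}{\beta_n}e_n = it\,u_t$, using $\beta_{n+1}=\beta_n w_{n+1}$. Since $A$ is bounded, $e^{sA}u_t=\sum_k \frac{s^k A^k}{k!}u_t = e^{ist}u_t$. So $(T_s)$ is a uniformly continuous, hence $\mathcal C_0$, semigroup, and the required relation holds.

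\emph{Density.} This is the main step. Let $N\subset J$ be Lebesgue-null and let $x^*=(y_n)\in \ell_{p'}$ (or $c_0$-dual conventions for $p=1$, i.e.\ $\ell_\infty$) annihilate $u_t$ for all $t\in J\setminus N$. Then
\[
h(t):=\sum_n \frac{i^n y_n}{\beta_n} t^n
\]
vanishes on $J\setminus N$. The coefficients satisfy $|y_n/\beta_n|\le \|y\|_\infty C r^n$, so $h$ is a power series with radius of convergence at least $1/r>\eta$. Thus $h$ is analytic on a neighbourhood of $J$. It vanishes on $J\setminus N$, a set of positive measure, which has accumulation points in $J$, so $h\equiv0$. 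Therefore $y_n/\beta_n=0$ for all $n$, and since $\beta_n \ne 0$, $x^*=0$. Hence $\mathrm{span}\{u_t: t\in J\setminus N\}$ is dense.

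Theorem \ref{thm2espacecomplexemesurealéacomplexe} now gives that the distribution of $v$ is $(T_t)$-invariant, has full support, and is isotropic $\alpha$-stable. For $p=2$ and $\alpha=2$, the same theorem gives strong mixing of the Gaussian measure. The remaining point to be careful about is that $\alpha<\min(p,2)$ is exactly what guarantees stable type $\alpha$ here; no other obstacle is anticipated.
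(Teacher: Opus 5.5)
Your proposal is correct and follows the paper's route: you apply Theorem~\ref{thm2espacecomplexemesurealéacomplexe} on $J=[-\eta,\eta]$ with the eigenvector field $u_t=(i^nt^n/\beta_n)_{n\ge0}$, and you check stable type, integrability via $\eta<1/\rho$, and $T_su_t=e^{ist}u_t$. The only difference is that you prove the density hypothesis yourself, via the identity theorem for the power series $\sum_n i^n y_n t^n/\beta_n$ (radius of convergence $>\eta$); the paper instead cites Agneessens' Theorem~4.4 for this step.
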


\smallskip

\begin{proof}
Following \cite[Thm. 4.4]{AGNEESSENS_2026}, let $v_{\lambda} := (\lambda^n / \beta_n)_{n \geq 0}$ for every $\lambda \in \C$. Then $v_{\lambda} \in \ell_p(\mathbb{Z}_+)$ if and only if $\displaystyle \sum_{n \geq 0} |\lambda|^{np} / |\beta_n|^p < \infty$, and in that case, we have $A(v_{\lambda}) = (\lambda^{n+1} / \beta_n)_{n \geq 0} = \lambda v_{\lambda}$. Note that the series $\displaystyle \sum_{n \geq 0} \lambda^n / \beta_n$ has a radius of convergence $R := 1/\rho$.
Let $\eta \in (0, R)$ and let $u_s := v_{is}$ for every $-\eta < s < \eta$. Then $u_s \in \ell_p(\mathbb{Z}_+)$ and $T_t u_s = e^{i s t} u_s$ for every $-\eta < s < \eta$ and $t \geq 0$, since $A$ is the generator of the semigroup $(T_t)_{t \geq 0}$. 

\smallskip
Now fix $0< \alpha <p$ and let $M$ be a complex isotropic $\alpha$-stable random measure on $[-\eta, \eta]$.
Let us notice that the map $s \mapsto u_s$ belongs to $L^2([-\eta, \eta];E)$ and thus to $L^\alpha([-\eta, \eta]; E)$. Indeed, this easily follows from the inequalities
\begin{align*}
    \int_{-\eta}^{\eta} \lVert u_t \rVert^2 dt &\leq 2 \eta + \int_{- \eta}^{\eta} \left( \sum_{j \geq 0} \lvert \lan e_j^*, u_t \ran \rvert^p \right)^{2/p} dt \\
    &\leq 2 \eta + 2 \eta \, \left(\sum_{j \geq 0} \frac{\eta^{pj}}{\lvert \beta_j \rvert^p} \right)^{2/p} \\
    &< \infty
\end{align*}
since $0 < \eta < R$. Moreover, it was shown in \cite[Thm. 4.4]{AGNEESSENS_2026} that $\textrm{span}\{ u_t : t \notin B \}$ is dense in $E$ for every Borel set $B$ of $[- \eta, \eta]$ such that $m(B) = 0$. Since $E$ is of stable type $\alpha$, we conclude that the distribution of $\int_{- \eta}^\eta u_t M(dt)$ is invariant with full support with respect to $(T_t)_{t \geq 0}$ by Theorem \ref{thm2espacecomplexemesurealéacomplexe}. The conclusion for the case $p = 2$ follows from the fact that $E$ is of stable type $2$. This concludes the proof of Theorem \ref{theoremexemplesemigpshiftspoids}.
\end{proof}

\section{Stable invariant measures for operators}

\subsection{Main results}

We study the analogue of Theorems \ref{thm1espacereeloucomplexemesurealéaréelleoucomplexe} and \ref{thm2espacecomplexemesurealéacomplexe}  for linear operators. In this case, we replace integrals over $J \subseteq \R$ by sums over $J \subseteq \Z$, as in \cite{AGNEESSENS_2023}.

\smallskip

Let $M$ be either a real symmetric $\alpha$-stable random measure on $(\Z,\mathcal{P}(\Z))$ or a complex isotropic $\alpha$-stable random measure on $(\Z,\mathcal{P}(\Z))$ with the counting measure $m$ as control measure. Let $f$ be a real- or complex-valued measurable function on $\Z$ in $\ell^\alpha(\Z)$, depending on whether the random measure $M$ is real or complex. Then the random integral $\int_{\Z} f \, dM$ is a sum of independent symmetric $\alpha$-stable random variables. Indeed, if $f$ has finite support, then  
\begin{align*}
    \int_{\Z} f \, dM = \sum_{n \in \Z} f(n) M(\{ n \}),
\end{align*}
and this relation extends to all $\alpha$-summable functions on $\Z$ by density arguments. Let us also notice that Lemmas \ref{lemmeindependanceintegrales}, \ref{lemmeloiintegralesintervallestranslatés} and \ref{lemmeconvergenceprobaintegrales} can easily be adapted to the case of a real or complex stable random measure on $(\Z, \mathcal{P}(\Z),m)$, where $m$ is the counting measure. With these adapted lemmas, we can prove the following analogue of Theorem \ref{thm1espacereeloucomplexemesurealéaréelleoucomplexe} for operators.
\smallskip

\begin{theorem} \label{thm1operatorsmesurealeareelleoucomplexstable}
    Let $E$ be a real or complex separable Banach space of stable type $\alpha \in (0,2]$. Let $T$ be a bounded linear operator on $E$ and suppose that there exists a sequence $(u_n)_{n \in \Z}$ of vectors in $E$ with $\displaystyle \sum_{n \in \Z} \lVert u_n \rVert^\alpha < \infty$ such that $T u_n = u_{n-1}$ for every $n \in \Z$. Let $M$ be either a real symmetric $\alpha$-stable random measure on $(\Z,\mathcal{P}(\Z))$ if $E $ is real, or a complex isotropic $\alpha$-stable random measure on $(\Z,\mathcal{P}(\Z))$ if $E$ is complex, with the counting measure as control measure. Suppose that $\textrm{span}\{ u_n : n \in \Z \}$ is dense in $E$. Then the distribution of the random vector
    $$
    v:= \displaystyle \sum_{n \in \Z} u_n X_n,
    $$
    where $X_n := M(\{n \})$ for every $n \in \Z$, is invariant and strongly mixing with full support under $T$, and $v $ is a $S\alpha S$ random vector in $E$. 
\end{theorem}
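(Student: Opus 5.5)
The proof will follow that of Theorem \ref{thm1espacereeloucomplexemesurealéaréelleoucomplexe}, with integrals over $\R$ replaced by sums over $\Z$ and the Lebesgue measure replaced by the counting measure $m$.

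\textbf{Step 1: $v$ is well defined and $S\alpha S$.} Since $n\mapsto u_n$ belongs to $\ell^\alpha(\Z;E)=L^\alpha(\Z;E)$ and $E$ has stable type $\alpha$, the integral $v=\int_\Z u_n\,M(dn)$ is defined as in (\ref{definitionintegraleEdetypestablealpha}). For finitely supported truncations it equals $\sum_{|n|\le N}u_nX_n$. The bound (\ref{inequationintegralesBanachtypestable}) applied to the tails shows that the series $\sum_n u_nX_n$ converges in $L^q(\Omega;E)$ for $q<\alpha$, hence in probability. Its limit is $v$, and $v$ is weakly integrable by Lemma \ref{LemmeMesurealéatoirestablereelleintegrfaibleetforte} (resp. Lemma \ref{LemmeMesurealéatoirestablecomplexeintegrfaibleetforte}). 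Its characteristic function is given by (\ref{fonccaractrandomintBanachMreelle}) or (\ref{fonccarrandomintegralsmesurecomplexeisotrope}) with the counting measure:
\[
\hat\mu(x^*)=\exp\Bigl(-c\sum_{n\in\Z}|\lan x^*,u_n\ran|^\alpha\Bigr),
\]
with $c=1$ in the real case and $c=c_0$ in the complex case. This is a symmetric $\alpha$-stable characteristic function.

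\textbf{Step 2: invariance.} By Proposition \ref{propweaklystochintrealrandommeasure}, $\hat\mu(T^*x^*)=\exp\bigl(-c\sum_n|\lan x^*,u_{n-1}\ran|^\alpha\bigr)=\hat\mu(x^*)$, since reindexing $\Z$ by a shift preserves the counting measure. This is the discrete form of Lemma \ref{lemmeloiintegralesintervallestranslatés}. Equivalently, $Tv=\sum_n u_{n-1}X_n$ has the same law as $v$ because $(X_{n+1})_n$ and $(X_n)_n$ are equidistributed i.i.d. families.

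\textbf{Step 3: full support.} Lemma \ref{lemmasupportloistable} reduces this to showing that any $x^*$ with $\hat\mu(tx^*)=1$ for all $t\in\R$ is zero. Such an $x^*$ satisfies $\sum_n|\lan x^*,u_n\ran|^\alpha=0$, so $x^*$ vanishes on every $u_n$. Density of $\textrm{span}\{u_n\}$ then forces $x^*=0$. No null-set condition is needed here, since singletons have positive counting measure.

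\textbf{Step 4: strong mixing.} This is the main step and follows the argument for Theorem \ref{thm1espacereeloucomplexemesurealéaréelleoucomplexe} closely. Fix bounded uniformly continuous $f,g$ and $\varepsilon>0$. Using the discrete Lemma \ref{lemmeconvergenceprobaintegrales} (tails $\sum_{n>N}u_nX_n\to0$ in probability), together with uniform continuity and boundedness, choose $N$ such that the approximations $g(\sum_{n\le N}u_nX_n)\approx g(v)$ and $f(\sum_{n\ge -N}u_nX_n)\approx f(v)$ hold up to $\varepsilon$ in $L^1(\Omega)$.

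For $k>2N$, write $T^kv=\sum_n u_{n-k}X_n$. Split it as $X+Y$, where $X=\sum_{n<k-N}u_{n-k}X_n$ and $Y=\sum_{n\ge k-N}u_{n-k}X_n$. The pair $(X,Y)$ has the same law as $(U,V)$, where $U=\sum_{n<-N}u_nX_n$ and $V=\sum_{n\ge-N}u_nX_n$, and in each pair the two components are independent (discrete Lemma \ref{lemmeindependanceintegrales}). Hence $\|f(T^kv)-f(Y)\|_{L^1}=\|f(v)-f(V)\|_{L^1}<\varepsilon$. Moreover, $Y$ depends only on the $X_n$ with $n\ge k-N>N$, so it is independent of $\sum_{n\le N}u_nX_n$. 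The same telescoping decomposition (\ref{egalitesurfgpreuvethm1}) then gives
\[
\Bigl|\int f(T^kv)g(v)\,d\P-\int f(v)\,d\P\int g(v)\,d\P\Bigr|\le 2(\|f\|_\infty+\|g\|_\infty)\varepsilon .
\]
By density of bounded uniformly continuous functions in $L^2(\mu)$, this gives strong mixing.

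The only delicate point is the tail control in Step 1 and Step 4, which needs convergence in probability rather than in $L^p$ for large $p$. The stable type $\alpha$ inequality (\ref{inequationintegralesBanachtypestable}) provides exactly this. Passing from convergence in probability to $L^1$ convergence of $g(\cdot)$ uses boundedness of $g$.
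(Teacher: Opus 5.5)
Your proposal is correct and takes the same route as the paper. The paper proves this theorem only by noting that the independence, translation-invariance and tail-convergence lemmas carry over to $\Z$ with the counting measure, and then rerunning the proof of the semigroup version: invariance via the characteristic function, full support via Lemma \ref{lemmasupportloistable}, and the same four-term splitting for strong mixing. Your write-up spells that adaptation out explicitly, including the correct remark that the null-set condition reduces to density of $\textrm{span}\{u_n : n \in \Z\}$.
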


\smallskip
Note that Theorem \ref{thm1operatorsmesurealeareelleoucomplexstable} corresponds to \cite[Prop. 2.5]{AGNEESSENS_2023} but in the context of stable random variables and is thus less general. However, the proof in \cite[Prop 2.5]{AGNEESSENS_2023} relies on \cite[Lemmas 2.2 and 2.3]{AGNEESSENS_2023} for general random variables, whereas our Theorem \ref{thm1operatorsmesurealeareelleoucomplexstable} rely on the properties of stable random integrals. Note that we could also give another proof of the mixing condition in the case where $E$ is real or complex and $\alpha \in (0,2]$ as we did for semigroups, using \cite[Cor. 4.1]{MauPrivault} as well as the covariance operator.

\smallskip

The analogue of Theorem \ref{thm2espacecomplexemesurealéacomplexe} for operators is the following.
\smallskip

\bth \label{thm2operatormesurealeacomplexstable}
Let $E$ be a complex separable Banach space of stable type $\alpha \in (0,2]$ and let $J $ be a subset of $\Z$. Let $T$ be a bounded linear operator on $E$ and suppose that there exists a sequence $(u_n)_{n \in J}$ of vectors in $E$ with $\displaystyle \sum_{n \in J} \lVert u_n \rVert^\alpha < \infty$ such that $T u_n = \lambda_n u_{n}$ for every $n \in J$ with $\lambda_n \in \T$. Let $M$ be a complex isotropic $\alpha$-stable random measure on $(J,\mathcal{P}(J))$ with the counting measure as control measure. Suppose that $\textrm{span}\{ u_n : n \in J \}$ is dense in $E$. Then the distribution of the random vector
    $$
    v:= \displaystyle \sum_{n \in J} u_n X_n,
    $$
    where $X_n := M(\{n \})$ for every $n \in J$, is invariant with full support under $T$, and $v $ is an isotropic $\alpha$-stable random vector. Such a distribution is never ergodic. 
\eth

\smallskip

\bpf
Using the characteristic function (\ref{fonccarrandomintegralsmesurecomplexeisotrope}), as well as Lemma \ref{lemmasupportloistable}, we can show as in the proof of Theorem \ref{thm2espacecomplexemesurealéacomplexe} that the distribution of the random vector $v$ is $T$-invariant with full support. It remains to show that the distribution of $v$ is not ergodic with respect to $T$ when $\alpha \in (0,2)$. Indeed, the case $\alpha = 2$ was treated by Grivaux \cite[Thm. 5.1]{Grivaux11}. Let us denote by $\mu$ the distribution of $v$.

\smallskip
Exactly as in the proof of Theorem \ref{thm2espacecomplexemesurealéacomplexe}, it is enough to show that
\begin{align*}
    \frac{1}{N} \sum_{k=1}^N \E_{\mu}(e^{i \Re(\lan T^{*k} x^* - x^*, \cdot \ran)}) 
\end{align*}
does not converge to $\E_\mu (e^{i \Re(\lan x^*, \cdot \ran)}) \cdot \E_\mu(e^{-i \Re(\lan x^*, \cdot \ran)})$ in order to prove that $\mu$ is not ergodic with respect to $T$, that is, it is enough to show that
\begin{align*}
    \frac{1}{N} \sum_{k=1}^N \exp \left(- c_0 \sum_{n \in J} \lvert 1 - \lambda_n^k \rvert^\alpha \lvert \lan x^*, u_n \ran \rvert^\alpha + 2 c_0 \sum_{n \in J} \lvert \lan x^*, u_n \ran \rvert^\alpha  \right)  
\end{align*}
does not converge to $1$ by (\ref{fonccarrandomintegralsmesurecomplexeisotrope}). And again by Jensen's inequality, we have
\begin{align*}
    &\frac{1}{N} \sum_{k=1}^N  \exp \left(-c_0 \sum_{n \in J} \lvert 1-\lambda_n^k \rvert^\alpha \lvert \lan x^*, u_n \ran \rvert^\alpha + 2 c_0 \sum_{n \in J} \lvert \lan x^*, u_n \ran \rvert^\alpha \right) \\
    &\geq \exp \left(2 c_0 \sum_{n \in J} \lvert \lan x^*, u_n \ran \rvert^\alpha - c_0 \sum_{n \in J}  \frac{1}{N} \sum_{k=1}^N \lvert 1 - \lambda_n^k \rvert^\alpha \lvert \lan x^*, u_n \ran \rvert^\alpha \right).  
\end{align*}
Let us denote by $a_N(\lambda) = \frac{1}{N} \displaystyle \sum_{k=1}^N \lvert 1 - \lambda^k \rvert^\alpha$ for $N \geq 1$ and $\lambda \in \T$. If $\lambda \in \T$ is not a root of unity, then (see for example \cite[Ex. 10.4]{EisnerFarkas25}) the sequence $(\lambda^n)_{n \geq 0}$ is uniformly distributed in the circle and thus $a_N(\lambda) \underset{N \to \infty}{{\longrightarrow}} \frac{1}{2 \pi} \int_0^{2 \pi} \lvert 1 - e^{i t} \rvert^\alpha dt = 2^\alpha c_0$. If $\lambda $ is a $q$-th root of unity for some $q \geq 1$, then $a_N(\lambda) \underset{N \to \infty}{{\longrightarrow}} \frac{1}{q} \displaystyle \sum_{k=0}^{ q - 1} \lvert 1 - \lambda^k \rvert^\alpha $. Let $a(\lambda) := \displaystyle \lim_{N \to \infty} a_N(\lambda)$ for $\lambda \in \T$.  Then 
\[
2 c_0 \sum_{n \in J} \lvert \lan x^*, u_n \ran \rvert^\alpha - c_0 \sum_{n \in J}  \frac{1}{N} \sum_{k=1}^N \lvert 1 - \lambda_n^k \rvert^\alpha \lvert \lan x^*, u_n \ran \rvert^\alpha
\]
converges as $N \to \infty$ to $c_0 \displaystyle \sum_{n \in J} (2 - a(\lambda_n)) \lvert \lan x^*, u_n \ran \rvert^\alpha$ by Lebesgue's convergence theorem. To conclude that $\mu$ is not ergodic with respect to $T$, it is enough to show that $$\displaystyle \sum_{n \in J} (2 - a(\lambda_n)) \lvert \lan x^*, u_n \ran \rvert^\alpha > 0$$ 
for some $x^* \in E^*$. 

\smallskip
Let us notice that if $\lambda^q = 1$ for some $q \geq 2$ and $\lambda \ne 1$, then
\begin{align*}
    \frac{1}{q } \displaystyle \sum_{j=0}^{q-1} \lvert 1 - \lambda^j \rvert^\alpha &\leq  \left( \frac{1}{q} \sum_{j=0}^{q-1} \lvert 1 - \lambda^j \rvert^2 \right)^{\alpha/2} \\ 
    &\leq 2^{\alpha/2}
\end{align*}
since $\lvert 1 - \lambda^j \rvert^2 = 2 - 2 \Re(\lambda^j)$ and $\lambda^q = 1$.
Since $2 - 2^\alpha c_0 >0$ when $\alpha \in (0,2)$ by the proof of Theorem \ref{thm2espacecomplexemesurealéacomplexe}, we obtain that $2 - a(\lambda_n) >0 $ for every $n \in J$ when $\alpha \in (0,2)$, and thus $\displaystyle \sum_{n \in J} (2 - a(\lambda_n)) \lvert \lan x^*, u_n \ran \rvert^\alpha > 0$ for every $x^* \ne 0$. This concludes the proof of Theorem \ref{thm2operatormesurealeacomplexstable}.
\epf

Note that Grivaux's result \cite[Thm. 5.1]{Grivaux11}, corresponding to the case $\alpha = 2$ in Theorem \ref{thm2operatormesurealeacomplexstable}, strongly relies on the square integrability of the random variables $X_n$, $n \in J$. Therefore, the case $\alpha \in (0,2)$ in Theorem \ref{thm2operatormesurealeacomplexstable} provides an example of a sequence $(X_n)_{n \in J}$ of non-square-integrable complex rotationally invariant random variables such that the distribution of the series
\[
\sum_{n \in J} u_n X_n
\]
is $T$-invariant with full support but not ergodic.

\subsection{Applications}

Let $E$ be either $\ell_p(\Z_+)$, $\ell_p(\Z)$, $c_0(\Z_+)$ or $c_0(\Z)$ with $1 \leq p < \infty$. Recall that the unilateral (resp.\ bilateral) weighted shift on $E$ with sequence of weights $(w_n)_{n \geq 1}$ (resp. $(w_n)_{n \in \Z}$) is defined by $B_w e_0 = 0$ and $ B_w e_n=w_n e_{n-1}$ for all $n\geq 1$ (resp.
$B_w (e_n)=w_n e_{n-1}$ for all  $n\in\mathbb Z$), where $(w_n)_n$ is a bounded sequence of nonzero scalars called the \emph{weight sequence}.
If $B_w$ is a unilateral weighted shift on $\ell_p(\Z_+)$ or $c_0(\Z_+)$, we set
\[
\beta_n := w_1 \cdots w_n \quad \text{for } n\geq 1
\quad \text{and} \quad
\beta_0 := 1.
\]
If $B_w$ is a bilateral weighted shift on $\ell_p(\Z)$ or $c_0(\Z)$, we set
\[
\beta_n := w_1 \cdots w_n \quad \text{for } n\geq 1,
\]
\[
\beta_n := \left(\prod_{k=n+1}^{0} w_k \right)^{-1}
\quad \text{if } n\leq -1,
\qquad \text{and} \qquad
\beta_0 := 1.
\]

\smallskip

Since $\ell_p$ is of stable-type $0 < \alpha <p$ if $p < 2$ and of stable-type $2$ if $p \geq 2$, we obtain the following result.

\smallskip

\begin{proposition}
    Let $1 \le p < \infty$ and let $B_w$ be a unilateral or bilateral weighted shift with weight sequence $(w_n)_n$ acting on $\ell_p(\mathbb Z_+)$ or $\ell_p(\mathbb Z)$ respectively. Let $0<\alpha<p$ if $p<2$ and $0<\alpha \leq2$ if $p \geq2$.
Assume that
\[
\sum_{n\ge1}\frac{1}{|w_1 \dotsm w_n|^\alpha}<\infty
\]
in the unilateral case, and that
\[
\sum_{n\in\mathbb Z}\frac{1}{|\beta_n|^\alpha}<\infty
\]
in the bilateral case.
Then $B_w$ admits a strongly mixing complex isotropic $\alpha$-stable invariant measure with full support.
\end{proposition}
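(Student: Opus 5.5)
The plan is to apply Theorem \ref{thm1operatorsmesurealeareelleoucomplexstable} with an explicit bilateral backward orbit built from the canonical basis. The underlying space $E$ is $\ell_p(\Z_+)$ or $\ell_p(\Z)$, taken complex. It is of stable type $\alpha$ in the stated range of $\alpha$ (stable type $\alpha<p$ when $p<2$, stable type $2$ when $p\geq 2$), so the geometric hypothesis of that theorem holds. We take $M$ to be a complex isotropic $\alpha$-stable random measure on $(\Z,\mathcal P(\Z))$ with counting control measure.

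\emph{Bilateral case.} Set $u_n := e_n/\beta_n$ for $n\in\Z$. From $B_w e_n = w_n e_{n-1}$ and the definition of $\beta_n$, we have $\beta_n = w_n\beta_{n-1}$ for every $n\in\Z$; this holds for $n\geq 1$ directly, and for $n\leq 0$ it follows from the product formula. Hence
\[
B_w u_n = \frac{w_n}{\beta_n} e_{n-1} = \frac{e_{n-1}}{\beta_{n-1}} = u_{n-1}.
\]
Moreover $\sum_{n}\|u_n\|^\alpha=\sum_n |\beta_n|^{-\alpha}<\infty$ by assumption, and $\mathrm{span}\{u_n\}=\mathrm{span}\{e_n\}$ is dense.

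\emph{Unilateral case.} This is the only point requiring care, because $B_w e_0=0$ and so $e_0$ has no natural preimage at negative indices. Define $u_n := e_n/\beta_n$ for $n\geq 0$ and $u_n:=0$ for $n<0$. Then $B_w u_n = u_{n-1}$ for $n\geq 1$, $B_w u_0 = 0 = u_{-1}$, and $B_w u_n = 0 = u_{n-1}$ for $n<0$. Summability reduces to $1+\sum_{n\geq1}|w_1\cdots w_n|^{-\alpha}<\infty$, and the span is again dense.

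Since zero vectors are allowed in Theorem \ref{thm1operatorsmesurealeareelleoucomplexstable}, its hypotheses are met in both cases. The theorem then yields that the law of $v=\sum_n u_nX_n$ is invariant, strongly mixing and of full support for $B_w$, and that $v$ is $S\alpha S$. It remains to check isotropy in the complex case. By (\ref{fonccarrandomintegralsmesurecomplexeisotrope}), the characteristic function $\exp\bigl(-c_0\sum_n|\langle x^*,u_n\rangle|^\alpha\bigr)$ is unchanged when $x^*$ is replaced by $\lambda x^*$ with $\lambda\in\T$. Hence $\lambda v$ and $v$ have the same law, i.e.\ the measure is complex isotropic $\alpha$-stable.

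The main (mild) obstacle is the unilateral case, where the orbit must be extended by zeros to negative indices; one should double-check that the proof of Theorem \ref{thm1operatorsmesurealeareelleoucomplexstable} does not implicitly need $u_n\neq0$. It does not: the full-support argument only uses density of the span of the $u_n$. Alternatively, one may view $\ell_p(\Z_+)$ as a quotient of $\ell_p(\Z)$ and push forward.
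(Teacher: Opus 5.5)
Your proposal is correct and follows the paper's proof exactly. The paper also applies Theorem \ref{thm1operatorsmesurealeareelleoucomplexstable} with $u_n = e_n/\beta_n$ for $n\geq 0$ and $u_n = 0$ for $n\leq -1$ in the unilateral case, and with $u_n = e_n/\beta_n$ for all $n\in\Z$ in the bilateral case. Your extra checks (the recursion $\beta_n = w_n\beta_{n-1}$, that zero vectors do no harm, and isotropy via the characteristic function) spell out what the paper leaves implicit.
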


\smallskip
\bpf
The proof follows from Theorem \ref{thm1operatorsmesurealeareelleoucomplexstable} by setting $u_n = \frac{e_n}{\beta_n}$ if $n \geq 0$ and $u_n = 0$ if $n \leq -1$ in the unilateral case, and $u_n = \frac{e_n}{\beta_n}$ for every $n \in \Z$ in the bilateral case.
\epf

\smallskip
Similarly, since $c_0(\Z_+)$ and $c_0(\Z)$ are of stable-type $\alpha$ for every $\alpha \in (0,1)$, we obtain the following one.

\smallskip

\bpr
Let $B_w$ be a unilateral or bilateral weighted shift with weight sequence $(w_n)_n$ acting on $c_0(\mathbb Z_+)$ or $c_0(\mathbb Z)$ respectively. Let $0<\alpha<1$.
Assume that
\[
\sum_{n\ge1}\frac{1}{| w_1 \dotsm w_n|^\alpha}<\infty
\]
in the unilateral case, and that
\[
\sum_{n\in\mathbb Z}\frac{1}{|\beta_n|^\alpha}<\infty
\]
in the bilateral case.
Then $B_w$ admits a strongly mixing complex isotropic $\alpha$-stable invariant measure with full support.
\epr

\smallskip

The following general result is a refinement of \cite[Proposition 5.1]{MauPrivault} to every $0 <\alpha \leq 2$. 

\smallskip

\bpr \label{propexistencemeasurestableinvariantoperator}
Let $E$ be a complex separable Banach space and let $T : E \to E$ be a bounded linear operator such that the eigenvectors of $T$ associated to unimodular eigenvalues span a dense subspace of $E$. Then, for any $0< \alpha \leq 2$, $T$ admits a complex isotropic $\alpha$-stable invariant measure with full support. 
\epr

\smallskip
\bpf
Let $(x_n)_{n \geq 1} \subset E$ be linearly independent vectors and $(\lambda_n)_{n \geq 1} \subset \T$ such that $\overline{\textrm{span}\{ x_n : n \geq 1\}} = E$ and $T x_n = \lambda_n x_n$, $\lVert x_n \rVert = \frac{1}{n}$ and $\lvert \lambda_n \rvert = 1$ for every $n \geq 1$. Let $ F : \ell^{2}(\N) \to E $ be defined by $F(e_n) = x_n$ for every $n \geq 1$. The map $F$ is a continuous linear bijection. Note that $\ell^{2}(\N)$ is of stable type $2$ and thus of stable type $\alpha$ for every $0<\alpha \leq 2 $.
Let $\alpha <2$ and $\alpha_n = \frac{1}{n^\beta}$ with $\beta > \frac{1}{\alpha}$ for every $n \geq 1$. Then $(F^{-1} T F )(\alpha_n e_n) = \lambda_n \alpha_n e_n$ for every $n \geq 1$ and $\displaystyle \sum_{n \geq 1} \lVert \alpha_n e_n \rVert_{\ell^{2}(\N)}^{\alpha} < \infty$. Moreover, $\textrm{span}\{ \alpha_n e_n : n \geq 1 \}$ is dense in $\ell^{2}(\N)$. Thus, by Theorem \ref{thm2operatormesurealeacomplexstable}, the operator $F^{-1} T F$ admits a complex isotropic $\alpha$-stable invariant measure with full support in $\ell^{\alpha}(\N)$. Let us denote by $\mu_{\alpha}$ such a measure and let $\nu_{\alpha} := \mu_{\alpha} \circ F^{-1}$. Using the characteristic function (\ref{fonccarfonccomplexmesureisotropecomplex}) of $\mu_{\alpha}$, we can easily show that $\nu_{\alpha}$ is $T$-invariant and complex isotropic $\alpha$-stable. Finally, we can easily show that the measure $\nu_{\alpha}$ has full support in $E$ using that $\mu_\alpha$ has full support in $\ell^2(\N)$. This concludes the proof of Proposition \ref{propexistencemeasurestableinvariantoperator}. 
\epf

\section{Random integrals with respect to infinitely divisible random measures}
We discuss infinitely divisible invariant measures for operators and operator semigroups. Let $\mu$ be an infinitely divisible distribution on $V = \R$ or $V = \C$, with characteristic function written for every $z \in V$ as 
\begin{align*}
    \hat{\mu}(z) = \exp(\psi(z))
\end{align*}
\enlargethispage{-6\baselineskip}
with
\begin{align*}
    \psi(z) := -\frac{1}{2} q(z) + \int_V (e^{i \Re(\overline{z}w)} -1 -i \Re(\overline{z} w) k(w)) \nu(dw),
\end{align*}
where $q : V \to [0,\infty)$ is a quadratic form, $k$ is a truncation function and $\nu$ is a measure of $V$ satisfying $\nu(\{ 0 \}) = 0$ and $\int_V \min(\lvert w \rvert^2, 1) \nu(dw) < \infty$, see \cite[pp. 7--8]{Rosinski87}. A $V$-valued infinitely divisible random measure $M$ on a $\sigma$-finite measure space $(J, \Sigma, m)$ is a random measure such that for every $A \in \Sigma$ with $m(A) < \infty$, we have 
\begin{align}
    \E(e^{i \Re(\overline{z} M(A))}) = \exp(m(A) \psi(z)), \; z \in V. 
\end{align}
We assume that $E$ is a real or complex Banach space of type $p \in [1,2]$, and that $M$ is an infinitely divisible random measure on $(J,\Sigma,m)$, taking values in $\mathbb{R}$ or $\mathbb{C}$ according as $E$ is real or complex. We also suppose that $\E(M(A)) = 0$ and that $\E(\lvert M(A) \rvert^p) \leq m(A)$ for every $A \in \Sigma$ with $m(A) < \infty$. We define a strong random integral with respect to $M$ as for stable random measures. For a simple function $f = \displaystyle \sum_{k=1}^N x_k \mathbf{1}_{A_k}$ on $J$, where $x_k \in E$ and $(A_k)_{1\le k\le N}$ are pairwise disjoint measurable sets with $m(A_k)>0$, we again set
\begin{align*}
    \int_J f(t) M(dt) = \sum_{k=1}^N x_k M(A_k).
\end{align*}
Since $E$ is of type $p$, we have by \cite[Prop. 6.7.1]{Woyczynski19}:
\begin{align*}
    \E \left \lVert \int_J f(t) M(dt) \right \rVert^p &= \E \left \lVert \sum_{k=1}^N x_k M(A_k) \right \rVert^p \\
    &\leq C \sum_{k=1}^N \lVert x_k \rVert^p \E(\lvert M(A_k) \rvert^p) \\
    &\leq C \sum_{k =1}^N \lVert x_k \rVert^p m(A_k) \\
    &\leq C \int_{J} \lVert f(t) \rVert^p m(dt)
\end{align*}
where $C>0$ is a constant depending on $p$. As for stable random measures, we can extend the definition of $\int_J f dM$ to every function $f \in L^p(J;E)$ and the inequality 
\begin{align*}
    \E \left \lVert \int_J f(t) M(dt) \right \rVert^p \leq C \int_{J} \lVert f(t) \rVert^p m(dt)
\end{align*}
again holds. Since the convergence in probability implies weak convergence, we have, for a function $f \in L^p(J;E)$ and a sequence of step functions $(f_n)_{n \geq 1}$ converging to $f$ in $L^p(J;E)$,
\begin{align*}
    \E(e^{i \Re(\lan x^*, \int_J f dM \ran)}) &= \lim_{n \to \infty} \E(\exp({i \Re \lan x^*, \sum_k x_k^n M(A_k^n) \ran))} \\
    &=\lim_{n \to \infty} \prod_k \E(e^{i \Re(\lan x^*, x_k^n \ran M(A_k^n))}) \\
    &= \lim_{n \to \infty} \exp \left ( \sum_k m(A_k^n) \psi(\overline{\lan x^*, x_k^n }\ran)\right ) \\
    &= \lim_{n \to \infty} \exp \left(\int_J \psi(\overline{ \lan x^*, f_n(t) \ran}) m(dt) \right) \\
    &= \exp \left(\int_J \psi(\overline{ \lan x^*, f(t) \ran})  m(dt) \right)
\end{align*}
by \cite[Lemma 3.1.4]{Rosinski87}. Note that if $A,B $ are two measurable disjoint sets and if $f,g \in L^p(J;E)$, then $\int_A f dM$ and $\int_B g dM$ are independent. Indeed, if $f = \displaystyle \sum_{k} x_k \indic_{A_k}$ and $g = \displaystyle \sum_{k} y_k \indic_{B_k} $ are two simple functions with $A_k \subset A,\, B_k \subset B$, $m(A_k) < \infty$ and $m(B_k) < \infty$ for each k, then the integrals $\int_A f dM$ and $\int_B f dM$ are independent. By density and since independence is preserved under weak convergence, this again holds for any $f, g \in L^p(J;E)$. Moreover, if $J$ is an interval of the real line and $m$ is the Lebesgue measure, or if $J$ is a subset of $\Z$ and $m$ is the counting measure, then for any $s \in \R$ (resp. $s \in \Z$), and for any $f \in L^p(J;E)$, the random integrals $\int_{J+s} f(t-s) M(dt)$ and $\int_J f(t) M(dt)$ have the same distribution, by the expression of the characteristic function. In this setting, we also have that $\int_0^t f(s) M(ds)$ converges in $L^p(\Omg; E)$ to $\int_{0}^\infty f dM$ as $t \to \infty$ if $J$ is an interval with the Lebesgue measure and if $f \in L^p([0,\infty); E)$, and similarly if $J$ is a subset of $\Z$ with the counting measure. Thus, the proofs of Theorems \ref{thm1espacereeloucomplexemesurealéaréelleoucomplexe} and \ref{thm1operatorsmesurealeareelleoucomplexstable} extend to this setting and gives a strongly mixing infinitely divisible invariant measure for the semigroup or the operator. Only the question on the support of the invariant measure remains open. We discuss now two examples.

\smallskip
\begin{example}
    Let $E$ be a real Banach space of type $p \in [1,2]$, let $(J,\Sigma, m)$ be a $\sigma$-finite measure space and let $\mu$ be the symmetric Poissonian distribution with Lévy measure $\nu = \frac{1}{2} (\delta_1 + \delta_{-1})$, i.e., $\hat{\mu}(t) = e^{\cos(t) -1}$ for every $t \in \R$. Let $M$ be a random measure with characteristic function
    \begin{align*}
        \E(e^{i t M(A)}) = e^{m(A)(\cos(t)-1)}, t \in \R,
    \end{align*}
    for each set $A $ of finite measure. Then $\E(M(A)) = 0$ and $\E(\lvert M(A) \rvert^p) \leq m(A)$ for every set $A$ with $m(A) < \infty$, see \cite[Page 188]{Woyczynski19}. Then, under the assumptions of Theorems \ref{thm1espacereeloucomplexemesurealéaréelleoucomplexe} and \ref{thm1operatorsmesurealeareelleoucomplexstable}, the distribution of $\int_J u_t M(dt)$ is  invariant and strongly mixing under the semigroup or the operator.
\end{example}

\smallskip

\begin{example}
    Let $E$ be a complex Banach space of type $2$, let $(J,\Sigma, m)$ be a $\sigma$-finite measure space and let $\mu$ be the symmetric Poissonian distribution with characteristic function $\hat{\mu}(z) = e^{\hat{\nu}(z) -1}$ for every $z \in \C$, where $\nu$ is a symmetric complex distribution with $\int_{\C} \lvert w \rvert^2 \nu(dw) < \infty$. Let $M$ be a random measure with characteristic function
    \begin{align*}
        \E(e^{i \Re(\overline{z} M(A))}) = e^{m(A)(\hat{\nu}(z)-1)}, z \in \C,
    \end{align*}
    for each set $A $ of finite measure.
    The measure $\mu$ is given by $\mu = e^{-1} \displaystyle \sum_{k \geq 0} \frac{\nu^k}{k!}$, called the exponent of $\nu$ (see \cite[Page 63]{Linde86}).
    Thus the distribution of $M(A)$ is the exponent of $m(A) \nu$ and we have $\E(M(A)) = 0$ and $\E(\lvert M(A) \rvert^2) = m(A) \int_{\C} \lvert w \rvert^2 \nu(dw)$ for every set $A$ with $m(A) < \infty$. For example, if $\nu$ is the uniform distribution on the unit circle of $\C$, then $\E(\lvert M(A) \rvert^2) = m(A) $ and then under the assumptions of Theorems \ref{thm1espacereeloucomplexemesurealéaréelleoucomplexe} and \ref{thm1operatorsmesurealeareelleoucomplexstable}, the distribution of $\int_J u_t M(dt)$ is invariant and strongly mixing under the semigroup or the operator. Note that in the context of Theorem \ref{thm1operatorsmesurealeareelleoucomplexstable}, the random vector $\displaystyle \sum_{n \in \Z} u_n M(\{ n \})$ has full support. Indeed, the random variables $M(\{ n \}), n \in \Z$, are i.i.d with distribution $\mu$. Moreover, the support of $\mu$ is given by $\overline{\displaystyle \bigcup_{k \geq 0} \textrm{supp}(\nu^k)}$ (see \cite[Prop. 5.8.4]{Linde86}). Since $k \cdot \T = \{ z \in \C : \lvert z \rvert \leq k \}$ for every $k \geq 2$, the distribution $\mu$ has full support, and so $\displaystyle \sum_{n \in \Z} u_n M(\{ n \})$ has full support too by \cite[Thm. 3.9]{AGNEESSENS_2023}.   
\end{example}

\bibliography{bibdossier}

\end{document}